\DeclareMathAlphabet{\pazocal}{OMS}{zplm}{m}{n}
\newcommand{\emp}{}
\newcommand{\R}{\mathbf{R}}
 \newcommand{\RR}{\mathbf{R}}  
\newcommand{\ee}{\mathbf e}
\def\dt       {\frac{d}{dt}\,}
\def\partialt       {\frac{\partial}{\partial t}\,}
\def\pproof#1{\@ifnextchar[\opargproof
{\opargproof[\it Proof of #1.]}}
\def\opargproof[#1]{\par\noindent {\bf #1 }}
\newcommand{\grim}{\mathcal{G}}
\newcommand{\DELTA}{\operatorname{\Delta}}
\numberwithin{equation}{section}
\begin{document}
\mainmatter  
\title{Notes on translating solitons for Mean Curvature Flow}
\titlerunning{Translating solitons for MCF}
\author{David Hoffman\inst{1} \and Tom Ilmanen\inst{2} \and Francisco Martín\inst{3}\thanks{F. Martín was partially supported by the MINECO/FEDER grant MTM2017-89677-P and  by the
Leverhulme Trust grant IN-2016-019.} \and Brian White\inst{4}\thanks{B. White was partially supported by grants from the Simons Foundation (\#396369) and from the National Science Foundation (DMS~1404282, DMS~1711293).}}
\authorrunning{D. Hoffman et al.}

\institute{Department of Mathematics,
 Stanford University, 
   Stanford, CA 94305, USA\\
\email{dhoffman@stanford.edu}
\and
Department of Mathematics,
E. T. H. Zürich, 
Rämistrasse 101, 
8092 Zürich,
Switzerland\\
\email{tom.ilmanen@math.ethz.ch}
\and
Departamento de Geometría y Topología, 
Universidad de Granada, 18071 Granada, Spain\\
\email{fmartin@ugr.es}
\and
Department of Mathematics,
 Stanford University,
   Stanford, CA 94305, USA\\
\email{bcwhite@stanford.edu}
}

\maketitle

\begin{abstract}
These notes provide an introduction to 
 translating solitons for the mean curvature flow in $\R^3$.
 In particular, we describe  a full classification of the translators that 
are complete graphs over domains in $\R^2$. 
\keywords{Mean curvature flow, singularities, monotonicity formula, area estimates, comparison principle.}
\end{abstract}

\section{Introduction}

\label{sec:intro}
Mean curvature flow is an exciting area of mathematical research. It is situated at the crossroads of several scientific disciplines: geometric analysis, geometric measure theory, partial differential equations, differential topology, mathematical physics, image processing, computer-aided design, among others. 
In these notes, we give a brief introduction to mean curvature flow
and we describe recent progress on translators, an important class of solutions
to mean curvature flow.

In physics, {\emp diffusion} is  a process which equilibrates spatial variations in concentration. If we consider a
initial concentration $u_0$ on a domain $\Omega \subseteq \RR^{2}$ and seek solutions of the linear {\emp heat equation}
$\frac{\partial}{\partial t} u -\DELTA u=0, $
with initial data $u_0$ and natural boundary conditions on $\partial \Omega$, we obtain  {\emp smoothed}
concentrations $\{u_t\}_{t>0}$. 
\begin{figure}[h]
\includegraphics[width=.9 \textwidth]{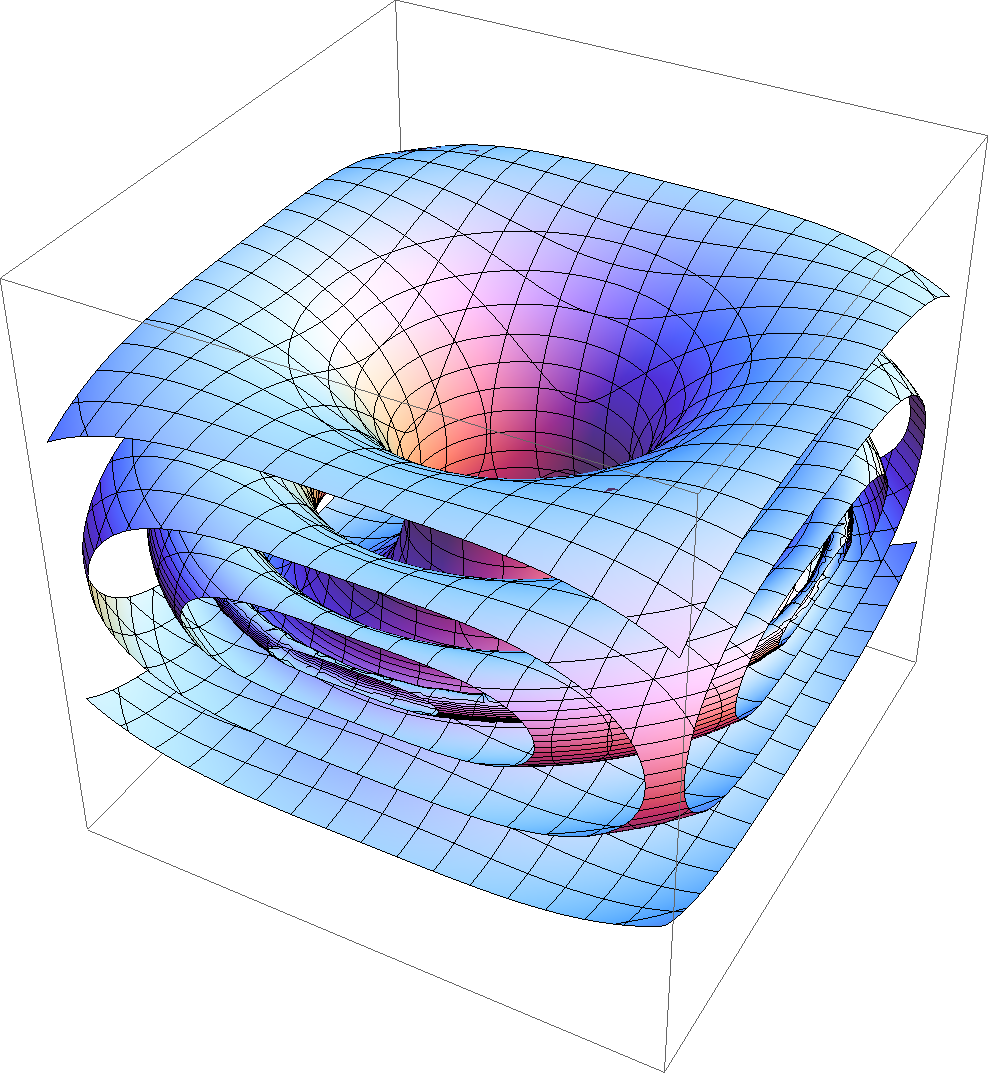}\caption{A surface moving by mean curvature.}\label{conejo}
\end{figure}
If we are interested in the {\emp smoothing of perturbed surface geometries}, it make sense to use analogous 
strategies.
The geometrical counterpart of the Euclidean Laplace operator $\DELTA$ on a smooth surface $M^2 \subset \RR^{3}$ (or more generally, a hypersurface $M^n \subset \RR^{n+1}$) is the Laplace-Beltrami operator, which we will denote as $\DELTA_M$. Thus, we obtain the {\emp geometric diffusion equation} 
\begin{equation} \label{eq:intro2}
\frac{\partial}{\partial t}\; \mathbf{x}=\DELTA_{M_t}  \mathbf{x},
\end{equation}
for the coordinates {\bf x} of the corresponding family of surfaces $\{M_t\}_{t \in [0,T)}.$


A classical formula (see \cite{hildebrandt}, for instance) says that, given a hypersurface in Euclidean space, one has:
$$ \DELTA_{M_t} \mathbf{x} = \vec{H},$$
where $\vec{H}$ represents the {\emp mean curvature} vector. This means that \eqref{eq:intro2} can be written as:
\begin{equation} \label{eq:intro3}
\frac{\partial}{\partial t}\; \mathbf{x}(p,t)=\vec{H}(p,t).
\end{equation}
Using techniques of parabolic PDE's it is possible to deduce  the existence and uniqueness of the mean curvature flow for a small time period in the case of compact manifolds (for details see \cite{regularityTheoryMCF}, \cite{LecturesNotesOnMCF}, among others). 
\begin{theorem} \label{teo:existencia_y_unicidad_del_MCF}
Given a compact, immersed hypersurface $M$ in $\mathbf{R}^{n+1}$ then there exists a unique mean curvature flow defined on an interval $[0,T)$
with initial surface $M$.
\end{theorem}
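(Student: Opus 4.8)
The plan is to convert the geometric evolution equation \eqref{eq:intro3} into a strictly parabolic quasilinear PDE and then invoke standard parabolic theory. Fix an immersion $F_0\colon M\to \R^{n+1}$ of the compact manifold $M$ with (locally defined) unit normal $\nu$. Since $M$ is compact, there is a uniform tubular-neighborhood size, so any immersion close to $F_0$ and isotopic to it can be written, for a short time, as a normal graph $F(p,t)=F_0(p)+u(p,t)\,\nu(p)$ with height function $u\colon M\times[0,T)\to\R$ and $u(\cdot,0)=0$. The point of this ansatz is that it fixes the parametrization: solving \eqref{eq:intro3} as a flow of submanifolds is unaffected by tangential reparametrizations (which do not move the image), and the normal-graph representation is one convenient choice of gauge.

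First I would insert the ansatz into \eqref{eq:intro3}. Writing $\vec H=H\,\nu_F$ for the evolving surface and projecting $\partial_t F=\vec H$ onto $\nu_F$ yields a scalar equation of the form
\begin{equation*}
\frac{\partial u}{\partial t}=a^{ij}(p,u,\nabla u)\,\nabla_i\nabla_j u+b(p,u,\nabla u),
\end{equation*}
where $\nabla$ is the Levi--Civita connection of $F_0^*(\,\cdot\,)$, the tensor $(a^{ij})$ is positive definite for $u,\nabla u$ small (indeed $a^{ij}$ differs from $g^{ij}$ by terms of order $|u|+|\nabla u|^2$, and its principal symbol is that of $\DELTA_{M_t}$), and the dependence on $(u,\nabla u)$ is smooth. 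Thus, in this gauge, the flow is a smooth strictly parabolic quasilinear equation on a compact manifold with smooth initial data. To solve it for short time I would linearize about $u\equiv 0$, obtaining a uniformly parabolic linear operator with smooth coefficients; the $L^p$ and Schauder theory for linear parabolic equations on compact manifolds gives solvability in the parabolic H\"older space $C^{2+\alpha,\,1+\alpha/2}(M\times[0,T])$. Writing the nonlinear problem as a fixed-point equation and shrinking the time interval so the relevant map becomes a contraction (or applying the inverse function theorem between the appropriate Banach spaces), one obtains a solution $u$ on some $M\times[0,T)$; standard parabolic bootstrapping makes it smooth, and $F(\cdot,t)=F_0+u(\cdot,t)\nu$ is the desired mean curvature flow.

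For uniqueness, suppose $F^1,F^2$ are mean curvature flows with the same compact initial immersion $F_0$. For short time each has image that is a normal graph over $F_0$, so after reparametrizing by (time-dependent) tangential diffeomorphisms --- which changes neither the image nor the property of being a flow of submanifolds solving \eqref{eq:intro3} --- both are represented by height functions $u^1,u^2$ satisfying the same quasilinear parabolic equation with $u^1(\cdot,0)=u^2(\cdot,0)=0$. The difference $w=u^1-u^2$ then solves a linear parabolic equation (with coefficients built from $u^1,u^2$) with zero initial data, hence $w\equiv 0$ by uniqueness for linear parabolic equations (alternatively by an energy estimate or the maximum principle). A connectedness argument in $t$ propagates the agreement over the whole common interval of existence.

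The step I expect to be the main obstacle is the parabolicity issue. Read as a system for the position vector $\mathbf{x}$ with no gauge fixing, \eqref{eq:intro3} is only weakly (degenerate) parabolic: its principal symbol annihilates directions tangent to the surface, reflecting the diffeomorphism invariance of mean curvature. The normal-graph reduction above is exactly what circumvents this; an equivalent remedy is DeTurck's trick of adding a judiciously chosen tangential vector field to \eqref{eq:intro3} so that the system becomes strictly parabolic, solving that, and then composing with the flow of the corresponding time-dependent diffeomorphisms to recover an honest solution of \eqref{eq:intro3}. Once a good gauge is in place, the remainder is the textbook quasilinear parabolic argument sketched above.
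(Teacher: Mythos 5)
The paper does not actually prove this theorem: it is quoted as a standard fact, with the reader referred to Ecker's and Mantegazza's books for the details. Your proposal is essentially the argument given in those references --- fix the gauge (normal graphs over the initial immersion, or equivalently DeTurck's trick), reduce to a scalar strictly parabolic quasilinear equation, solve by linearization plus a fixed-point/inverse-function argument, and get uniqueness from the linear equation satisfied by the difference of two height functions --- and you correctly identify the degenerate parabolicity of the unreduced system as the crux. The one point you gloss over is that the theorem asserts uniqueness of the \emph{parametrized} flow $F$ solving $\partial_t F=\vec H$ (whose velocity is purely normal), whereas your argument directly yields only that the two flows have the same images for each $t$; to finish, one must note that once the image flow is known, the condition that $\partial_t F^i$ has no tangential component, together with $F^1(\cdot,0)=F^2(\cdot,0)$, forces the reparametrizing diffeomorphisms to solve the same ODE with the same initial data, hence to coincide. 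With that (standard) addendum the sketch is complete.
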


The mean curvature is known to be the first variation of the area functional $M \mapsto \int_M d \mu$ (see \cite{colding-minicozzi, meeks-perez}.) We will obtain for the $\operatorname{Area} (\Omega(t))$ of a relatively compact $\Omega(t) \subset M_t$ that 
$$ \dt \left( \operatorname{Area}(\Omega(t) \right) = - \int_{\Omega(t)} |\vec{H}|^2 d\mu_t.$$
In other words, we get that the mean curvature flow is the corresponding {\emp gradient flow for the area functional}: 
\begin{remark}
\emp The mean curvature flow is the flow of steepest descent of surface area.
\end{remark}
Moreover, we also have a nice {\emp maximum principle} for this particular diffusion equation.
\begin{theorem}[Maximum/comparison principle] \label{th:compa}
If two compact immersed hypersurfaces of $\RR^{n+1}$ are initially disjoint, they remain so. Furthermore, compact embedded hypersurfaces remain embedded.
\end{theorem}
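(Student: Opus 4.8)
The plan is to derive both statements from a single principle: the parabolic strong maximum principle forbids a tangential first contact between two solutions of mean curvature flow. Let $M_t=F(M,t)$ and $N_t=G(N,t)$ be the two flows, defined on the common interval $[0,T)$ furnished by Theorem~\ref{teo:existencia_y_unicidad_del_MCF}, and set
\[
d(t)=\min_{(p,q)\in M\times N}|F(p,t)-G(q,t)| .
\]
By compactness and the assumed initial disjointness, $d$ is a Lipschitz function of $t$ with $d(0)>0$; the goal of the first part is to show $d$ never vanishes.

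First I would fix $t_0$ with $d(t_0)>0$, pick a minimizing pair $(p_0,q_0)$, and write $w=F(p_0,t_0)-G(q_0,t_0)$, $\nu=w/|w|$. The first-order conditions for a minimum of $|F-G|^2$ give $w\perp T_{p_0}M_{t_0}$ and $w\perp T_{q_0}N_{t_0}$, so the two tangent planes coincide and $\nu$ is a common unit normal. Testing the second-order conditions with a pair of geodesics through $p_0$ and $q_0$ having the same initial velocity in this common plane kills the ``first derivative of $F$ minus first derivative of $G$'' term, and summing over an orthonormal basis yields $\langle w,\ \vec{H}_{M_{t_0}}(p_0)-\vec{H}_{N_{t_0}}(q_0)\rangle\ge 0$ (the sign is the familiar statement that at its closest pair each hypersurface bends away from the other; one verifies it on two nested round spheres). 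Since $\partial_t F=\vec{H}$ and $\partial_t G=\vec{H}$ by~\eqref{eq:intro3}, Hamilton's trick for differentiating a minimum gives
\[
\liminf_{h\downarrow 0}\frac{d(t_0+h)^2-d(t_0)^2}{h}\ \ge\ 2\,\langle w,\ \vec{H}_{M_{t_0}}(p_0)-\vec{H}_{N_{t_0}}(q_0)\rangle\ \ge\ 0 .
\]
Hence $d^2$, and so $d$, is nondecreasing as long as it is positive; since $d(0)>0$ and $d$ is continuous, $d\equiv d(0)$ cannot be undercut and $d(t)\ge d(0)>0$ on all of $[0,T)$, which is the first assertion.

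For the embeddedness claim I would argue by first contact. Suppose $M_0$ is embedded and let $t_0\in(0,T)$ be the first time $M_t$ fails to be embedded, so $F(p,t_0)=F(q,t_0)=x_0$ for some $p\neq q$. Choosing disjoint neighborhoods $U_p,U_q$ of $p,q$, the two sheets $F(U_p,t)$ and $F(U_q,t)$ are embedded and disjoint for $t<t_0$; writing each as a graph over the plane $\Pi:=T_{x_0}F(U_p,t_0)$ near $x_0$, connectedness of the common domain forces the graph functions to satisfy $u(\cdot,t)>v(\cdot,t)$ (after relabeling) for every $t<t_0$, so at $t_0$ we get $u\ge v$ with $u(x_0',t_0)=v(x_0',t_0)$ at an interior point $x_0'$ — in particular the contact is tangential and one-sided. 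Each of $u,v$ solves the quasilinear parabolic graph equation
\[
\partial_t u=\Big(\delta_{ij}-\frac{\partial_i u\,\partial_j u}{1+|\nabla u|^2}\Big)\partial_{ij}u ,
\]
so by the mean value theorem $w=u-v$ satisfies a linear, uniformly parabolic equation $\partial_t w=a_{ij}\partial_{ij}w+b_i\partial_i w$ with smooth coefficients, $w\ge 0$, and an interior zero at $(x_0',t_0)$. The strong maximum principle then forces $w\equiv 0$ near $(x_0',t_0)$ for $t\le t_0$, contradicting disjointness of the two sheets for $t<t_0$; hence embeddedness is preserved. (Applied instead to the two distinct flows at a would-be first contact, this same localized graphical argument reproves the avoidance statement.)

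The hard part is the ``rigidity at first contact'' input in each case: getting the sign right in the second-order/Hessian computation (equivalently, the fact that the closest pair of points sees each hypersurface curving away), and, for embeddedness, the observation that disjointness of two graphs over a connected domain makes the first self-contact necessarily tangential and one-sided, so that the height difference obeys a linear parabolic equation to which the strong maximum principle applies. The remaining ingredients — Hamilton's trick and the linearization of the graphical mean curvature operator — are routine bookkeeping.
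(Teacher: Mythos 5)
The paper states Theorem~\ref{th:compa} as a classical fact and offers no proof of its own, so there is nothing internal to compare against; judged on its merits, your argument is the standard textbook proof and is essentially correct. The avoidance half (minimize $|F(p,t)-G(q,t)|^2$, use the first-order conditions to align the tangent planes, sum the second-order conditions over an orthonormal frame to get $\langle w,\vec{H}_{M}(p_0)-\vec{H}_{N}(q_0)\rangle\ge 0$, then apply Hamilton's trick) and the embeddedness half (first contact is tangential and one-sided, so the difference of the two local graph functions is a nonnegative solution of a linear uniformly parabolic equation vanishing at an interior point, killed by the strong maximum principle) are exactly the two ingredients one needs. Two small points worth tightening: (i) as written, your $\liminf$ inequality at a fixed minimizing pair $(p_0,q_0)$ is not quite what Hamilton's trick gives when the minimizer is non-unique --- the clean statement is that $d^2$ is locally Lipschitz and at a.e.\ $t$ its derivative equals $\partial_t|F-G|^2$ at \emph{some} minimizing pair, which is $\ge 0$ by your second-order computation applied at that pair; the monotonicity conclusion is unaffected. (ii) In the embeddedness argument you should say a word about why a first non-embedded time exists (embeddedness of a compact immersion is an open condition in $t$) and why the second sheet is in fact a graph over $\Pi$ near $x_0$ (if the two tangent hyperplanes at $x_0$ were distinct, the sheets would meet transversally there, and transversal intersections persist for $t$ slightly less than $t_0$, contradicting disjointness). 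Neither issue is a genuine gap.
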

Similarly, convexity and mean convexity are preserved: 
\begin{itemize}
\item If the initial hypersurface $M$ is convex (i.e., all the geodesic curvatures are positive, or equivalently $M$ bounds a convex region of $\RR^{n+1}$), then $M_t$ is convex, for any $t$.
\item If $M$ is mean convex ($H>0$), then $M_t$ is also mean convex, for any $t$.
\end{itemize}
\par
Moreover, mean curvature flow has a property which is similar to the eventual simplicity for the solutions of the heat equation.
This result was proved by Huisken and asserts:
\begin{theorem}[\cite{huisken90}]
Convex, embedded, compact hypersurfaces converge to points $p \in \RR^{n+1}.$ After rescaling to keep the area
constant, they converge smoothly to round spheres.
\end{theorem}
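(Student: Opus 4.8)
The plan is to follow Huisken's original argument. Since $M$ is compact, Theorem~\ref{teo:existencia_y_unicidad_del_MCF} gives a unique solution $\{M_t\}$ of $\partialt \mathbf{x} = \vec H$ on a maximal interval $[0,T)$. The first task is to record the evolution equations obtained by differentiating the defining equation: for the induced metric one gets $\partialt g_{ij} = -2 H h_{ij}$, and for the second fundamental form and the mean curvature one gets reaction--diffusion equations of Bochner type, $\partialt h^i_j = \DELTA h^i_j + |A|^2 h^i_j$ and $\partialt H = \DELTA H + |A|^2 H$, together with $\partialt |A|^2 = \DELTA |A|^2 - 2|\nabla A|^2 + 2|A|^4$. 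Applying Hamilton's tensor maximum principle to the equation for $h^i_j$ shows that the condition $h_{ij} \ge 0$ is preserved, and in fact strict convexity holds for $t > 0$; applying the scalar maximum principle to $H$ shows $H_{\min}(t)$ is nondecreasing, so $H$ stays bounded below by a positive constant on $[0,T)$. By Cauchy--Schwarz $|A|^2 \ge \frac{1}{n} H^2$, hence $\partialt H \ge \DELTA H + \frac{1}{n} H^3$, and an ODE comparison forces $\max_{M_t} H \to \infty$ as $t \to T$ with $T < \infty$; using a diameter bound together with convexity one then shows the $M_t$ shrink to a single point $p$.

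The heart of the proof is the pinching estimate, which says that $M_t$ becomes quantitatively round where the curvature is large. One first checks that $f := (|A|^2 - \frac{1}{n} H^2)/H^2$, the normalized squared norm of the trace-free part of $A$, satisfies an evolution equation for which $\max_{M_t} f$ is nonincreasing, so the pinching present at $t = 0$ is preserved. This alone only gives $|A|^2 - \frac{1}{n} H^2 \le C H^2$, which is useless as $H \to \infty$; to gain a power one studies $f_\sigma := (|A|^2 - \frac{1}{n} H^2)/H^{2-\sigma}$ for small $\sigma > 0$, whose evolution equation contains a gradient term of indefinite sign. That term is absorbed by a Stampacchia-type iteration over the super-level sets $\{f_\sigma > k\}$, combined with Poincar\'e- and Sobolev-type inequalities on $M_t$ whose constants are controlled by the convexity and diameter bounds, yielding $|A|^2 - \frac{1}{n} H^2 \le C\, H^{2-\sigma}$ on $[0,T)$. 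Thus the trace-free curvature is dominated by the mean curvature as the singularity forms.

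To extract the limiting shape one rescales. Let $\tilde M_\tau$ be the family obtained by dilating $M_t$ about $p$ so that a fixed quantity --- for instance the area --- is constant, with a new time variable $\tau = \tau(t) \to \infty$ as $t \to T$; the rescaled surfaces solve a mean-curvature-type flow with an extra zeroth-order term. The pinching estimate together with the positive lower bound on $H$ keep the geometry of $\tilde M_\tau$ uniformly bounded above and away from degeneracy. Feeding the pinching estimate into Huisken's interpolation and gradient estimates --- of the form $|\nabla A|^2 \le \varepsilon |A|^4 + C_\varepsilon$, and their higher-order analogues giving uniform bounds on all $|\nabla^m A|$ --- shows that on the rescaled flow the oscillation of $\tilde H$ and the trace-free part of $\tilde A$ decay exponentially in $\tau$. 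Smooth subconvergence then follows from the uniform derivative bounds, and any limit is totally umbilic with constant mean curvature, hence a round sphere; the exponential decay promotes subconvergence to convergence of the full family.

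The main obstacle is the pinching estimate of the second paragraph: the step from ``convexity is preserved'' to ``the surface is asymptotically round'' is exactly where the argument has real content. The difficulty is that the maximum principle applied to $f_\sigma$ produces a gradient term that cannot be signed pointwise, so one must pass to an integral argument; making that work requires the iteration on level sets and uniform control of the Sobolev constant of $M_t$, which is only available because convexity and the diameter bound have already been established. Everything afterward --- long-time existence of the rescaled flow, exponential decay, and smooth convergence --- is then a fairly standard consequence of the gradient estimates.
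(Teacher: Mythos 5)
The paper does not prove this theorem; it is quoted as a known result of Huisken (the content is that of his 1984 paper on the flow of convex surfaces into spheres, even though the citation given is to the 1990 paper), so there is no in-paper argument to compare yours against. Your outline faithfully reproduces Huisken's strategy: the evolution equations you list are correct, and the sequence of steps --- preservation of convexity by the tensor maximum principle, monotonicity of $H_{\min}$, finite-time blow-up by ODE comparison, the pinching estimate for $f_\sigma=(|A|^2-\tfrac1n H^2)/H^{2-\sigma}$ via a Stampacchia level-set iteration, the gradient and interpolation estimates, and the rescaled flow with exponential decay of the trace-free curvature --- is exactly the architecture of that proof.

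Two caveats. First, as written this is a roadmap rather than a proof: the two steps carrying all the analytic weight, namely the iteration that upgrades the preserved pinching $|A|^2-\tfrac1n H^2\le C H^2$ to $|A|^2-\tfrac1n H^2\le C H^{2-\sigma}$, and the gradient estimate $|\nabla H|^2\le \varepsilon H^4 + C_\varepsilon$, are described but not derived, and each requires several pages of computation in Huisken's paper. Second, a small inaccuracy: the Sobolev inequality used in the iteration is the Michael--Simon inequality for hypersurfaces of $\RR^{n+1}$, whose constant is purely dimensional and does not come from convexity or diameter control. What convexity (more precisely, the preserved uniform pinching $h_{ij}\ge \varepsilon H g_{ij}$) actually supplies is the Poincar\'e-type inequality derived from the Simons identity, which is the other ingredient needed to absorb the bad gradient term. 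Neither point derails the plan, but the second and third paragraphs of your write-up assert, rather than establish, the estimates on which the conclusion rests.
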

As a consequence of the above theorems we have.

\begin{corollary}[\bf Existence of singularities in finite time] \label{co:nonsingular}
Let $M$ be a compact hypersurface in $\R^{n+1}$. If $M_t$ represents its evolution by the mean curvature flow, then $M_t$ must develop singularities in finite time. Moreover, if we denote this maximal time as $T_{\max}$, then
$$2\, n \, T_{\max} \leq  \left( \mbox{\rm diam}_{\R^{n+1}}(M) \right)^2.$$
\end{corollary}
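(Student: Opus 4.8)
The plan is to trap the evolving hypersurface inside a round sphere that shrinks to a point in finite time, and to quote the comparison principle (Theorem~\ref{th:compa}). First I would record the elementary fact that a round sphere is an explicit solution of the flow: if $S_R(p) = \partial B(p,R) \subset \R^{n+1}$, then under mean curvature flow it stays a sphere centered at $p$ with radius $r(t)$ satisfying $\dot r = -n/r$, since the mean curvature vector of a sphere of radius $r$ points inward with length $n/r$. Hence $r(t) = \sqrt{R^2 - 2nt}$, so this sphere shrinks to the single point $p$ exactly at time $t = R^2/(2n)$, and no smooth mean curvature flow issuing from $S_R(p)$ can exist beyond that time.

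Next I would choose a good enclosing sphere. Fix any $p \in M$ and set $R = \dia_{\R^{n+1}}(M) + \epsilon$ for an arbitrary $\epsilon > 0$; then every point of $M$ lies within distance $\dia_{\R^{n+1}}(M) < R$ of $p$, so (the image of) $M$ is contained in $B(p,R)$ and is disjoint from $S_R(p)$. Evolving both initial surfaces by mean curvature flow, Theorem~\ref{th:compa} guarantees that $M_t$ and the shrinking sphere stay disjoint for as long as both are defined; since $M$ starts inside, $M_t \subset B\big(p,\sqrt{R^2 - 2nt}\big)$ for every $t < R^2/(2n)$ for which the flow $M_t$ exists. It follows that $M_t$ cannot be a smooth flow past time $R^2/(2n)$, so $T_{\max} \le R^2/(2n) = (\dia_{\R^{n+1}}(M) + \epsilon)^2/(2n)$; letting $\epsilon \downarrow 0$ yields $2\,n\,T_{\max} \le (\dia_{\R^{n+1}}(M))^2$. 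Since $M$ is compact this bound is finite, and because $[0,T_{\max})$ is the maximal interval of smooth existence provided by Theorem~\ref{teo:existencia_y_unicidad_del_MCF}, a singularity must form at the finite time $T_{\max}$, which proves the first assertion as well.

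The one step to handle with a little care — rather than genuine difficulty — is the passage from ``$M_t$ is trapped in a ball of radius $r(t) \to 0$'' to ``$M_t$ is not a smooth flow near $t = R^2/(2n)$''. The cleanest argument is by contradiction: if the flow were smooth on an interval containing $R^2/(2n)$, then $M_{R^2/(2n)}$ would be a smooth nonempty compact hypersurface, yet the containment $M_t \subset \overline{B(p,r(t))}$ with $r(t)\downarrow 0$ forces $M_t \to \{p\}$ in Hausdorff distance as $t \uparrow R^2/(2n)$, so $M_{R^2/(2n)} = \{p\}$, which is absurd. (Equivalently, a closed hypersurface contained in $\overline{B(p,r)}$ has $\max |A| \gtrsim 1/r$, so the second fundamental form blows up and the flow is singular no later than the enclosing sphere.) Everything else in the proof is just the explicit sphere solution together with the comparison principle.
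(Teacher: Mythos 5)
Your proposal is correct and follows essentially the same route as the paper: enclose $M$ in a round sphere centered at a point of $M$, use the explicit shrinking-sphere solution $r(t)=\sqrt{R^2-2nt}$ together with the comparison principle (Theorem~\ref{th:compa}), and conclude that $T_{\max}\le R^2/(2n)$. Your write-up is simply more careful than the paper's one-line argument, in particular in the $\epsilon$-limiting step that yields the sharp diameter bound and in justifying why confinement in balls shrinking to a point forces a singularity.
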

\begin{proof}
Since $M$ is compact, it is contained an open ball
$B(p,\rho)$. So, $M$ must develop a singularity before the flow of $\mathbf{S}^n_{p}$ collapses at the point $p$, as otherwise we would  contradict the previous theorem. The upper bound of $T_{\max}$ is just a consequence of the collapsing time of a sphere.
\end{proof}

A natural question is: What can we say when $M$ is not compact? 
In this case, we can have long-time existence. 
A trivial example is the case of a complete, properly embedded minimal hypersurface $M$  in $\R^{n+1}$. Under the mean curvature flow, $M$ remains stationary, so the flow exists for all time. If we are looking for non-stationary examples, then we can consider the following example:
\begin{example}[\bf grim reapers]
Consider the euclidean
product $M=\Gamma\times\RR^{n-1}$, where $\Gamma$ is the {\it grim reaper} in $\RR^{2}$ represented by the
immersion \begin{equation} \label{grim-1}
f:(-\pi/2,\pi/2)\to\RR^{2} \end{equation} $$f(x)=(x,\log\cos x).$$
  If, ignoring parametrization, we let $M_t$ be the result of flowing $M$ by mean curvature flow for time $t$,
   then $M_t =  M - t \, e_{n+1}$, where again $\{\operatorname{e}_1, \ldots,\operatorname{e}_{n+1}\}$ represents the canonical basis of $\RR^{n+1}$. In other words, $M$ moves by vertical translations. By definition, we say that
$M$ is {\bf  a translating soliton} in the direction of $-\,\mbox{\rm e}_{n+1}$. More generally, any translator
in the direction of $-\,\mbox{e}_{n+1}$ which is a Riemannian product of a planar curve and an euclidean space $\RR^{n-1}$
can be obtained from this example by a suitable combination of a rotation and a dilation (see \cite{himw} for further details.)  We refer to these translating hypersurfaces as
 {\bf $n$-dimesional grim reapers}, or  simply grim reapers if the $n$ is clear from  the context.
\begin{figure}[h]
\begin{center}\includegraphics[width=.35\textwidth]{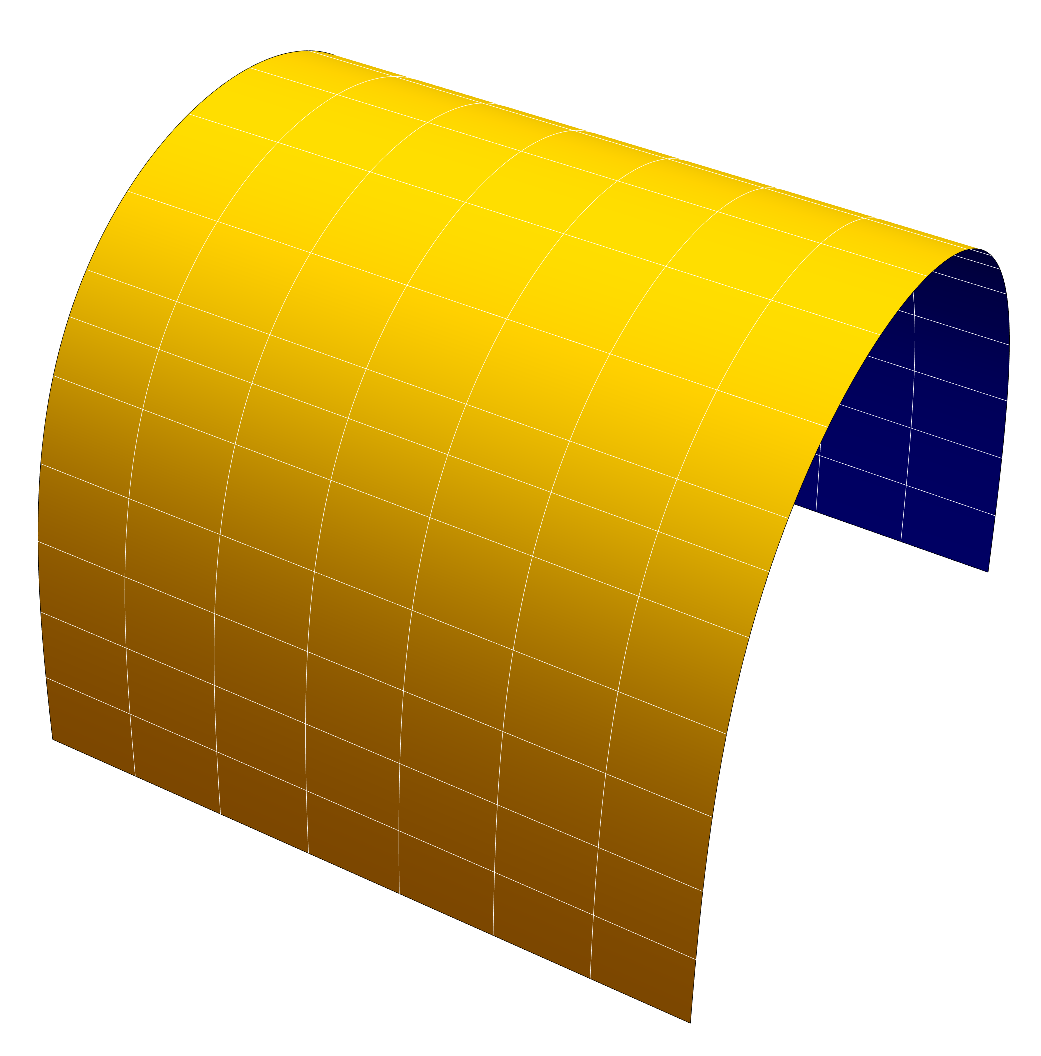} \end{center}\caption{A grim reaper}\label{grimreaperplane}
\end{figure}

\end{example}

\section{Some Remarks About Singularities}
\label{cap4:sec:singularidades}
Throughout this section, we consider a fixed compact initial hypersurface $M$. Consider the maximal time $T=T_M$ such that a smooth solution of the MCF $F: M \times [0,T) \to \RR^{n+1}$ as in Theorem \ref{teo:existencia_y_unicidad_del_MCF} exists.  
Then the embedding vector $F$ is uniformly bounded according to Theorem \ref{th:compa}. It follows that some spatial derivatives of the embedding $F_t$ have to become unbounded as $t \nearrow T$. Otherwise, we could apply Arzel\`a-Ascoli Theorem and obtain a smooth limit hypersurface, $M_T$, such that $M_t$ converges smoothly to $M_T$ as $t \nearrow T$. This is impossible because, in such a case, we could apply Theorem \ref{teo:existencia_y_unicidad_del_MCF} to restart the flow. In this way, we could extend the flow smoothly all the way up to $T+\varepsilon$, for some $\varepsilon>0$ small enough, contradicting the maximality of $T$. In fact, Huisken \cite[Theorem 8.1]{huisken84} showed that the second spatial derivative  (i.e, the norm
of the second fundamental form) blows up as $t \to T.$

We would like to say more about the ``blowing-up'' of the norm of $A$, as $t \nearrow T.$
The evolution equation for $|A|^{2}$ is 
$$\frac{\partial}{\partial t}|A|^{2}=\DELTA |A|^{2} -2\big|\nabla A \big|^{2}+2|A|^{4}.$$
Define $$|A|^2_{\text{max}}(t)\coloneqq \max_{M_{t}} |A|^2(\cdot, t).$$
Using Hamilton's trick (see \cite{LecturesNotesOnMCF}) we deduce that $|A|^2_{\max}$ is locally Lipschitz and that 
$$\dt |A|^2_{\max} (t_0)= \partialt |A|^2(p_0,t_0),$$
where $p_0$ is any point where $|A|^2(\cdot,t_0)$ reaches its maximum. 
Thus, using the above expression, we have
\begin{align*}
\dt |A|^2_{\max} (t_0) &= \partialt |A|^2(p_0,t_0)  \\
&= \DELTA |A|^{2}(p_0,t_0) -2\big|\nabla A(p_0,t_0)\big|^{2}+2|A|^{4}(p_0,t_0).
\end{align*}
It is well known that the Hessian of $|A|$ is negative semi-definite at any maximum. In particular the Laplacian of $|A|$ at these points is non-positive.
Hence,
$$\dt |A|^2_{\max} (t_0)\leq 2|A|^{4}(p_0,t_0) \leq  2|A|_{\text{max}}^{4}(t_0).
$$
Notice that $|A|_{\text{max}}^{2}$ is always positive, since otherwise  at some instant $t$ we would have $|A(\cdot,t)|\equiv 0$ along $M_{t}$, which would imply that  $M_{t}$ is totally geodesic  and therefore a hyperplane in $\mathbf{R}^{n+1}$, contrary to the fact that the initial surface was compact.

So,  one can prove that $1/|A|_{\text{max}}^{2}$ is locally Lipschitz. Then   the previous inequality allows us to deduce that:
$$-\frac{d}{dt}\left(\frac{1}{|A|_{\text{max}}^{2}} \right) \leq 2 \quad \mbox{\rm a.e. in $t\in [0,T)$.}$$
Integrating (respect to time)  in any  sub-interval $[t,s]\subset [0,T)$ we get
$$\frac{1}{|A|_{\text{max}}^{2}(t)}-\frac{1}{|A|_{\text{max}}^{2}(s)}\leq 2(s-t).$$

As  $|A(\cdot,t)|$ is not bounded as to tends to $T$, there exists a time sequence $s_{i} \nearrow T$ such that $$|A|_{\text{max}}^{2} (s_i)\to +\infty.$$ Substituting $s=s_{i}$ in the above inequality   and taking the limit as $i\to \infty$, we get
$$\frac{1}{|A|_{\text{max}}^{2}(t)}\leq 2(T-t).$$
We collect all this information in the next proposition.
\begin{proposition}
Consider the mean curvature flow for a compact initial hypersurface $M$. If $T$ is the maximal time of existence, then the following lower bound holds
$$|A|_{\text{max}}(t)\geq \frac{1}{\sqrt{2(T-t)}}$$
for all $t\in [0,T)$.

In particular,
$$\lim_{t\to T} |A|_{\text{max}}(t)=+\infty.$$
\end{proposition}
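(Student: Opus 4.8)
The plan is to follow the differential–inequality argument for the maximum of $|A|^2$ already sketched above and then integrate it in time. First I would record the evolution equation
$$\partialt |A|^2 = \DELTA |A|^2 - 2\big|\nabla A\big|^2 + 2|A|^4,$$
noted above. Since each $M_t$ is compact, the quantity $|A|^2_{\max}(t) = \max_{M_t}|A|^2(\cdot,t)$ is well defined and finite; by Hamilton's trick it is locally Lipschitz in $t$, hence differentiable for a.e. $t$, and at any such $t_0$ one has $\dt |A|^2_{\max}(t_0) = \partialt |A|^2(p_0,t_0)$ for any point $p_0$ realizing the spatial maximum of $|A|^2(\cdot,t_0)$.

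Next I would feed in the two sign conditions at $p_0$: the term $\DELTA |A|^2(p_0,t_0)\le 0$ because $|A|^2(\cdot,t_0)$ attains an interior maximum at $p_0$, and $-2|\nabla A(p_0,t_0)|^2 \le 0$ trivially. This gives the key inequality $\dt |A|^2_{\max}(t_0) \le 2\,|A|^4_{\max}(t_0)$ for a.e.\ $t_0 \in [0,T)$. To divide by $|A|^4_{\max}$ I need $|A|^2_{\max}(t) > 0$ for every $t$, which holds because $|A|(\cdot,t)\equiv 0$ would force $M_t$ to be totally geodesic, hence a hyperplane, contradicting compactness of the flow. Dividing then yields $-\dt\big(1/|A|^2_{\max}\big)\le 2$ a.e., with $1/|A|^2_{\max}$ again locally Lipschitz.

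Then I would integrate this last inequality over an arbitrary subinterval $[t,s]\subset[0,T)$ to get
$$\frac{1}{|A|^2_{\max}(t)} - \frac{1}{|A|^2_{\max}(s)} \le 2(s-t).$$
To finish I use that $|A|_{\max}$ is genuinely unbounded as $s\nearrow T$ — this is exactly Huisken's blow-up result (the norm of the second fundamental form blows up at the maximal time) quoted earlier. Picking a sequence $s_i\nearrow T$ with $|A|^2_{\max}(s_i)\to+\infty$ and letting $i\to\infty$ kills the second term, leaving $1/|A|^2_{\max}(t)\le 2(T-t)$, i.e.\ $|A|_{\max}(t)\ge 1/\sqrt{2(T-t)}$. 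Letting $t\to T$ in this bound immediately gives $\lim_{t\to T}|A|_{\max}(t)=+\infty$.

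The step I expect to require the most care is the regularity underlying Hamilton's trick: verifying that $t\mapsto|A|^2_{\max}(t)$ (and hence its reciprocal) is locally Lipschitz and that its a.e.\ derivative is computed at a maximizing point. Everything else is soft once that is in place, except that the passage to the limit $s_i\nearrow T$ genuinely needs Huisken's theorem — without it the estimate would be vacuous rather than a true lower bound on the blow-up rate.
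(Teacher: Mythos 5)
Your proposal is correct and follows essentially the same route as the paper: the evolution equation for $|A|^2$, Hamilton's trick at a maximizing point, the differential inequality $\dt |A|^2_{\max}\le 2|A|^4_{\max}$, integration of the reciprocal, and the appeal to Huisken's blow-up theorem to discard the boundary term as $s_i\nearrow T$. No gaps; your closing remark correctly identifies that the estimate would be vacuous without the blow-up result.
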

\begin{definition}
When this happens we say that $T$ is \textit{singular time } for the mean curvature flow.
\end{definition}

So we have the following improved version of Theorem \ref{teo:existencia_y_unicidad_del_MCF}:
\begin{theorem}
Given a compact, immersed hypersurface $M$ in $\mathbf{R}^{n+1}$ then there exists a unique mean curvature flow defined on a maximal interval $[0,T_{\text{max}})$.\newline
Moreover, $T_{\text{max}}$ is finite and
$$|A|_{\text{max}}(t)\geq \frac{1}{\sqrt{2(T_{\text{max}}-t)}}$$
for each $t\in [0,T_{\text{max}})$.\newline
\end{theorem}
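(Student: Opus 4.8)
The plan is to assemble the statement from three facts already established: short-time existence and uniqueness (Theorem~\ref{teo:existencia_y_unicidad_del_MCF}), the finite-time appearance of singularities for compact initial data (Corollary~\ref{co:nonsingular}), and the curvature lower bound of the preceding Proposition. So this is really a packaging argument rather than a new computation.

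First I would build the maximal flow. Let $\mathcal T$ be the set of $T>0$ for which a smooth mean curvature flow $F\colon M\times[0,T)\to\R^{n+1}$ with $F(\cdot,0)$ equal to the given immersion exists; it is nonempty by Theorem~\ref{teo:existencia_y_unicidad_del_MCF}. Put $T_{\max}=\sup\mathcal T\in(0,+\infty]$. If two such flows are defined on $[0,T_1)$ and $[0,T_2)$ with $T_1\le T_2$, then on each compact subinterval the uniqueness clause of Theorem~\ref{teo:existencia_y_unicidad_del_MCF} forces them to coincide, so they agree on all of $[0,T_1)$; consequently all these flows patch together into a single well-defined smooth flow on $[0,T_{\max})$, which is the unique maximal one.

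Finiteness of $T_{\max}$ is exactly Corollary~\ref{co:nonsingular}: $M$ is compact, hence contained in some ball $B(p,\rho)$; by the comparison principle (Theorem~\ref{th:compa}) $M_t$ stays inside the sphere $\mathbf S^n_p$ shrinking from $\partial B(p,\rho)$, and that sphere collapses in finite time, so $T_{\max}<\infty$, with the explicit bound $2\,n\,T_{\max}\le(\operatorname{diam}_{\R^{n+1}}M)^2$. For the curvature estimate I would apply the preceding Proposition with $T=T_{\max}$: by maximality $|A(\cdot,t)|$ cannot stay bounded as $t\nearrow T_{\max}$ — otherwise Arzel\`a--Ascoli would yield a smooth limit hypersurface $M_{T_{\max}}$ from which Theorem~\ref{teo:existencia_y_unicidad_del_MCF} would restart the flow beyond $T_{\max}$ — so the argument of that Proposition applies verbatim and gives $|A|_{\text{max}}(t)\ge 1/\sqrt{2(T_{\max}-t)}$ for every $t\in[0,T_{\max})$. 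The only mildly delicate points in the whole proof are this restart argument and the gluing-by-uniqueness step; everything else is a direct citation of results already in hand.
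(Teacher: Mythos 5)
Your proposal is correct and follows essentially the same route as the paper: the theorem is presented there as a repackaging of Theorem~\ref{teo:existencia_y_unicidad_del_MCF}, the finite-time singularity statement of Corollary~\ref{co:nonsingular}, and the curvature lower bound of the preceding Proposition, whose proof already contains the Arzel\`a--Ascoli restart argument you invoke. Your explicit gluing-by-uniqueness construction of the maximal interval is just a careful spelling-out of what the paper leaves implicit.
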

\begin{remark}
From the above proposition, we deduce the following estimate for the maximal time of existence of flow:
$$T_{\text{max}}\geq \frac{1}{2|A|_{\text{max}}^{2}(0)}.$$
\end{remark}
\begin{definition}
Let $T$ be the maximal time of existence of the mean curvature flow. If there is a constant $C>1$ such that 
$$|A|_{\text{max}}(t)\leq \frac{C}{\sqrt{2(T-t)}},$$
then we say that the flow develops a  \textit{Type I singularity} at instant $T$.    
Otherwise, that is, if
$$\limsup_{t\to T} |A|_{\text{max}}(t) \, \sqrt{(T-t)}=+\infty,$$
we say that is a \textit{Type II singularity}.
\end{definition}
We conclude this brief section by pointing out  that there have been substantial breakthroughs in the study and understanding of the singularities of Type I, whereas Type II singularities have been much more difficult to study. This seems reasonable since, according to the above definition and the results we have seen, the singularities of Type I are those for which one has the best possible control of blow-up of the second fundamental form.

\section{Translators} \label{sec:translators}

A standard example of Type II singularity is given by a loop pinching off to a cusp (see Figure \ref{cardiod}). S. Angenent \cite{An1} proved, in the case of convex planar curves, that singularities of this kind are asymptotic (after rescaling) to the above mentioned grim reaper curve \eqref{grim-1}, which moves set-wise by translation. In this case, up to inner diffeomorphisms of the curve, it can be seen as a solution of the curve shortening flow which evolves by translations and is defined for all time.
In this paper we are interested in this type of solitons, which we will call {\bf translating solitons (or translators)} from now on. Summarizing this information,  we make the following definition:
\begin{definition}[Translator]
A {translator} is a hypersurface $M$ in $\R^{n+1}$ such that 
\[
   t\mapsto M- t \,\ee_{n+1}
\]
is a mean curvature flow, i.e., such that normal component of the velocity at each point is
equal to the mean curvature at that point:
\begin{equation}\label{general-translator-equation}
   \vec{H} = -\ee_{n+1}^\perp.
\end{equation}
\end{definition}

\begin{figure}[htbp]
\begin{center}
\includegraphics[width=.78\textwidth]{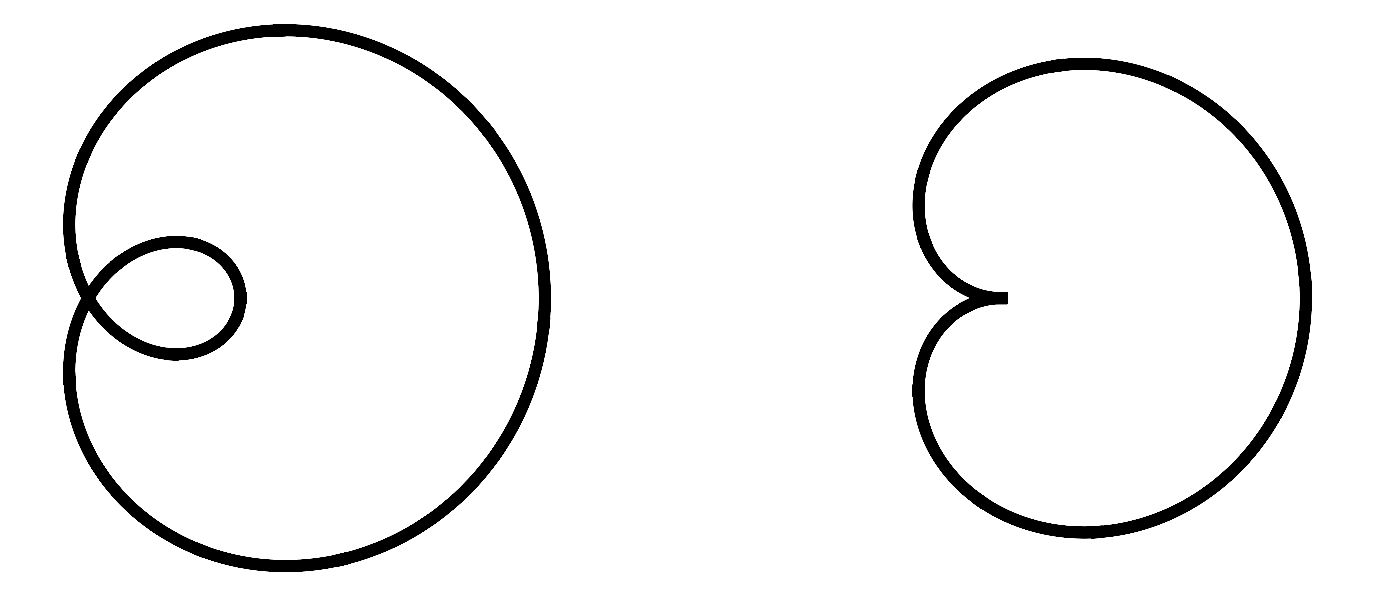}
\caption{}
\label{cardiod}
\end{center}
\end{figure}
The cylinder over a grim-reaper curve, i.e. the hypersurface in $\mathbf{R}^{n+1}$ parametrized by $\grim:\left(-\tfrac{\pi}{2},\tfrac{\pi}{2}\right)\times\mathbf{R}^{n-1}\longrightarrow \mathbf{R}^{n+1}$ given by $$\grim(x_{1},\ldots,x_{n})=(x_{1},\ldots,x_{n},-\log\cos x_{1}),$$ 
is a translating soliton.  It appears as limit of sequences of parabolic rescaled solutions of mean curvature flows of immersed mean convex hypersurfaces.  For example, we can take product of the loop pinching off to a cusp times $\R^{n-1}$.  We can produce others examples of solitons just by scaling and rotating the grim reaper. In this way, we obtain a  $1-$parameter family of translating solitons parametrized by $\grim_\theta:\left(-\tfrac{\pi}{2\cos(\theta)},\tfrac{\pi}{2\cos(\theta)}\right)\times\mathbf{R}^{n-1}\longrightarrow \mathbf{R}^{n+1}$
\begin{equation}\label{tiltedgrim}
\grim_\theta(x_{1},\ldots,x_{n})=(x_{1},\ldots,x_{n},\sec^2(\theta)\log\cos(x_{1}\cos(\theta))-\tan(\theta)x_{n}),
\end{equation}
where $\theta\in[0,\pi/2).$ Notice that the limit of the family $F_\theta$, as $\theta$ tends to $\pi/2$, is a hyperplane parallel to $\ee_{n+1}$.
\begin{figure}[htbp]
\begin{center}
\includegraphics[width=.65\textwidth]{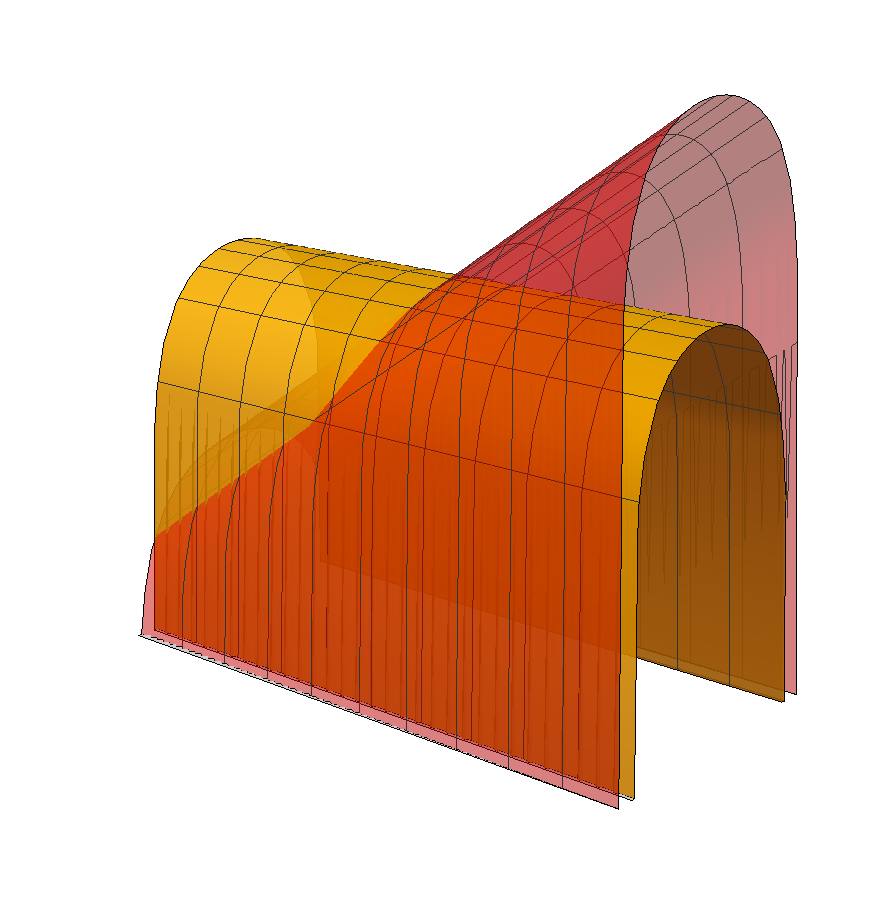}
\caption{The regular grim reaper in $\R^3$ and the tilted grim reaper for $\theta=\pi/6$.}
\label{default}
\end{center}
\end{figure}

\subsection{Variational approach}
Ilmanen~\cite{ilmanen} noticed that a translating soliton $M$ in $\RR^{n+1}$ can be seen as a minimal surface for the weighted volume functional 
\[
\mathcal{A}_f [M]= \int_M e^{-f}\, d\mu
\]
where $f$ represents the Euclidean height function, that is, the restriction of the last coordinate $x_{n+1}$ to $M$. We have the following 
\begin{proposition}[Ilmanen]
\label{first-variation}
Let $M^n$ be a translating soliton in $\R^{n+1}$ and let
{$N$ be its unit normal}.
 Then the {translator} equation 
\begin{equation}
\label{sol-scalar-bis}
H = \langle \ee_{n+1}, N\rangle
\end{equation}
on the relatively compact domain $\Omega \subset M$ is the Euler-Lagrange equation of the functional 
\begin{equation}
\label{vol-Mw}
\mathcal{A}_{f} [\Omega]={\rm vol}_{f} (\Omega) = \int_\Omega e^{-f}\, d\mu.
\end{equation} Moreover, the second variation formula for normal variations is given by
\begin{equation}
\label{second-variation-vol}
\delta^2\mathcal{A}_{f}[\Omega]\cdot (u,u)=-\int_M e^{-f}\,
u \, L_f  u \, d\mu,\quad u\in C^\infty_0(\Omega),
\end{equation}
where the stability operator $L_f$ is defined by
\begin{equation}
L_f u = \DELTA^{f} u + |A|^2 \, u
\end{equation}
where $\DELTA^f$ is the {\it drift Laplacian} given by
\begin{equation}
\label{deltaT-op}
\DELTA^{f} =\DELTA -\langle \nabla f, \nabla\, \cdot\,\rangle= \DELTA - \langle \ee_{n+1}, \nabla\,\cdot\,\rangle.
\end{equation}
\end{proposition}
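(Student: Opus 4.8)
The plan is to compute the first and second variations of $\mathcal{A}_f$ by hand, reducing everything to the classical variation formulas for the area element, the unit normal, and the mean curvature in $\R^{n+1}$. Concretely, I would fix a relatively compact $\Omega\subset M$, a function $u\in C^\infty_0(\Omega)$, and a normal variation $\{\Phi_s\}_{|s|<\eps}$ of $M$ supported in $\Omega$ with $\Phi_0=\ide$ and $\partial_s\Phi_s|_{s=0}=uN$; write $M_s=\Phi_s(M)$ and let $N_s$, $d\mu_s$, $H_s$ denote its unit normal, area element and scalar mean curvature, using the sign convention $\vec H=-H\,N$. The first step is the first variation. Since $f$ is the restriction of $x_{n+1}$, one has $\frac{d}{ds}\big|_{s=0}e^{-f\circ\Phi_s}=-u\langle\ee_{n+1},N\rangle\,e^{-f}$, while the classical first variation of area gives $\frac{d}{ds}\big|_{s=0}d\mu_s=\dive_M(uN)\,d\mu=-u\langle\vec H,N\rangle\,d\mu=uH\,d\mu$; adding these, $\frac{d}{ds}\big|_{s=0}\mathcal{A}_f[M_s]=\int_M e^{-f}\,u\,\big(H-\langle\ee_{n+1},N\rangle\big)\,d\mu$, and since $u$ is arbitrary the Euler--Lagrange equation is exactly \eqref{sol-scalar-bis}, equivalently $\vec H=-\ee_{n+1}^\perp$. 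This proves the first assertion.

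For the second variation I would use that at a critical point $\delta^2\mathcal{A}_f[\Omega]$ is a well-defined quadratic form on $C^\infty_0(\Omega)$, independent of the extension of $u$ to a variation, so the choice above is legitimate. Writing $\varphi_s=\langle\partial_s\Phi_s,N_s\rangle$ (so $\varphi_0=u$) and $H_{f,s}:=H_s-\langle\ee_{n+1},N_s\rangle$ for the weighted (Bakry--\'Emery) mean curvature of $M_s$, the first-variation computation above, read off at the parameter $s$, gives $\frac{d}{ds}\mathcal{A}_f[M_s]=\int_{M_s}e^{-f}\,\varphi_s\,H_{f,s}\,d\mu_s$. Differentiating once more and evaluating at $s=0$ on a translator, every term produced by differentiating $e^{-f}$, $\varphi_s$ or $d\mu_s$ carries a factor $H_{f,0}\equiv0$ and drops out, leaving $\delta^2\mathcal{A}_f[\Omega]\cdot(u,u)=\int_M e^{-f}\,u\,\big(\frac{d}{ds}\big|_{s=0}H_{f,s}\big)\,d\mu$. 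The last step is to linearize the weighted mean curvature operator: from the standard formulas $\frac{d}{ds}\big|_{s=0}N_s=-\nabla u$ and $\frac{d}{ds}\big|_{s=0}H_s=-(\DELTA u+|A|^2u)$ for a normal variation with speed $u$, and from $\nabla f=(\ee_{n+1})^\top$ (whence $\frac{d}{ds}\big|_{s=0}\langle\ee_{n+1},N_s\rangle=-\langle\nabla f,\nabla u\rangle$), I obtain $\frac{d}{ds}\big|_{s=0}H_{f,s}=-(\DELTA u+|A|^2u)+\langle\nabla f,\nabla u\rangle=-(\DELTA^f u+|A|^2u)=-L_fu$, which is precisely \eqref{second-variation-vol}. (If one prefers the quadratic-form version $\int_M e^{-f}\big(|\nabla u|^2-|A|^2u^2\big)\,d\mu$, it follows by the weighted divergence theorem $\int_M e^{-f}(\DELTA^f u)\,v\,d\mu=-\int_M e^{-f}\langle\nabla u,\nabla v\rangle\,d\mu$, valid since $u$ has compact support, and this exhibits $\mathcal{A}_f$ as the weighted analogue of the area functional.)

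A more conceptual packaging of the same argument, which I would at least mention, is that $e^{-f}d\mu$ is the induced volume element of the conformally flat metric $\tilde g=e^{-2f/n}\,\delta$ on $\R^{n+1}$; hence $\mathcal{A}_f$ is the area functional of $(\R^{n+1},\tilde g)$ and translators are exactly its minimal hypersurfaces, so the proposition reduces to the standard first and second variation formulas for minimal hypersurfaces in a Riemannian manifold, the Jacobi operator simplifying because $f$ is affine ($\operatorname{Hess}f\equiv0$) and $\R^{n+1}$ is flat ($\operatorname{Ric}\equiv0$). In either approach the only genuinely non-routine point is obtaining the linearization $\frac{d}{ds}\big|_{s=0}H_{f,s}=-L_fu$ of the weighted mean curvature --- that is, correctly identifying the Jacobi operator of $\mathcal{A}_f$ --- which requires combining the variation formulas for $N_s$ and $H_s$ with consistent sign conventions; the relative compactness of $\Omega$ and the compact support of $u$ are what guarantee that no boundary terms intrude at any stage.
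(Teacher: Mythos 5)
Your proposal is correct and follows essentially the same route as the paper: compute the first variation of $\int e^{-f}d\mu$ to read off the Euler--Lagrange equation $H=\langle \ee_{n+1},N\rangle$, observe that at a critical point the second variation reduces to $\int e^{-f}u\,\tfrac{d}{ds}\big(H_s-\langle\ee_{n+1},N_s\rangle\big)\,d\mu$, and linearize the weighted mean curvature using $\tfrac{d}{ds}N_s=-\nabla u$, the standard formula for $\tfrac{d}{ds}H_s$, and $\nabla f=\ee_{n+1}^\top$ to identify $L_f=\DELTA^f+|A|^2$. The only (immaterial) differences are that you restrict to purely normal variations from the outset, whereas the paper carries a tangential component $T$ and sets $T=0$ at the end, and that your sign conventions for $H$ are the opposite of the paper's but internally consistent.
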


\noindent \emph{Proof.} Given $\varepsilon >0$, let  $\Psi:(-\varepsilon, \varepsilon)\times M \to \RR^{n+1}$ be a variation of $M$ compactly supported in $\Omega\subset M$  with $\Psi(0,\,\cdot\,) = \mbox{Id}$ and
normal variational vector field
\[
\frac{\partial\Psi}{\partial s}\Big|_{s=0} = uN+T
\]
for some function $u\in C^\infty_0 (\Omega)$ and a tangent vector field $T\in \Gamma(TM)$. Here, $N$ denotes a local unit normal vector field along $M$.  Then
\begin{eqnarray*}
& &\frac{{\rm d}}{{\rm d}s}\Big|_{s=0}{\rm vol}_{f}[\Psi_s(\Omega)] =
\int_\Omega e^{-f}\,(\langle \ee_{n+1}, N\rangle-H) u \, d\mu + \int_\Omega {\rm div} (e^{-f}T)\, d\mu.
\end{eqnarray*}
Hence, stationary immersions for variations fixing  the boundary of $\Omega$ are characterized by the
scalar soliton equation
\[
H-\,\langle \ee_{n+1}, N\rangle =0\,\,\, \mbox{ on } \,\,\, \Omega \subset \subset M, 
\]
which yields (\ref{sol-scalar-bis}). Now we compute the second variation formula. At a stationary immersion
we have
\begin{eqnarray*}
& &\frac{{\rm d}^2}{{\rm d}s^2}\Big|_{s=0}{\rm vol}_{f}[\Psi_s(\Omega)] = 
\int_M e^{-f}\,\frac{d}{ds}\Big|_{s=0}(\langle \ee_{n+1},
N_s\rangle-H_s) u\, d\mu.
\end{eqnarray*}
Using the fact that
\begin{equation}
\label{der-N}
\bar\nabla_{\partial_s} N = -\nabla u -  \mathcal{W} T,
\end{equation}
(where $\mathcal{W}$ means the Weingarten map) then we compute
\begin{equation}
\label{der-angle}
\frac{{\rm d}}{{\rm d}s}\Big|_{s=0} \langle \ee_{n+1}, N\rangle = \langle
\bar\nabla_{\partial_s}\ee_{n+1}, N\rangle
+\langle \ee_{n+1}, \bar\nabla_{\partial_s} N\rangle=\langle \ee_{n+1}, -\nabla u- \mathcal{W} T\rangle.
\end{equation}
Since
\begin{equation}
\label{der-H}
\frac{{\rm d}}{{\rm d}s}\Big|_{s=0} H = \DELTA u + |A|^2 \, u + \mathcal{L}_T H
\end{equation}
and $\nabla \eta = \ee_{n+1}^\top$, we obtain for normal variations (when $T=0$)
\begin{align*}
 \frac{{\rm d}}{{\rm d}s}\big(H-\langle \ee_{n+1}, N\rangle\big) 
 &=\DELTA u -\langle \ee_{n+1},  \nabla u\rangle +|A|^2 \, u \\
 &=\DELTA^{f} u + |A|^2 u.
\end{align*}
This finishes the proof of the proposition. \hfill $\square$

\vspace{3mm}

The previous result has important consequences. 
It means that  a {hyper}surface $M\subset\RR^{n+1}$ 
is a translator if and only if
it is minimal with respect to the Riemannian metric
\[
    g_{ij}(x_1, \ldots,x_{n+1}) = \exp\left(-\frac2n \, x_{n+1} \right) \delta_{ij}.
\]
Although the metric $g_{ij}$ is not complete
(notice that the length of vertical half-lines in the direction of  $\ee_{n+1}$ is finite), we can apply
all the local results of the theory of minimal hypersurfaces in Riemannian manifolds.
Thus we can freely use curvature estimates and compactness theorems from minimal surface theory;
cf.~\cite[Chapter 3]{white-intro}.
In particular, if $M$ is a graphical translator, then (since vertical translates of it are also $g$-minimal)
$\left<\ee_3,\nu\right>$ is a nowhere vanishing Jacobi field, so $M$ is a stable $g$-minimal surface.
It follows that any sequence $M_i$ of complete translating graphs in $\RR^3$ has a subsequence that
converges smoothly to a translator $M$.  Also, if a translator $M$ is the graph of
a function $u:\Omega\to\RR$, then $M$ and its vertical translates from a $g$-minimal
foliation of $\Omega\times\RR$, from which it follows that $M$ is $g$-area minimizing in $\Omega\times\RR$,
and thus that if $K\subset \Omega\times \RR$ is compact, then the $g$-area of $M\cap K$ is
at most $1/2$ of the $g$-area of $\partial K$.  Hence, if we consider sequences
of translators that are manifolds-with-boundary, then the area bounds described above
together with standard compactness theorems for minimal surfaces (such as those
in~\cite{white-curvature-estimates, white-controlling}) give smooth, subsequential convergence,
including at the boundary.  This has been a crucial tool in \cite{mpss, himw, families-1, families-2}
(The local area bounds and bounded topology mean that the only boundary singularities
that could arise would be boundary branch points.   In the situations that occur in these papers, 
obvious barriers preclude boundary branch points.)

The situation for higher dimensional translating graphs is more subtle; (see \cite[Appendix A]{himw} and \cite{nino}).

\section{Examples of translators}

Besides the grim reapers that we have already described, the last decades have witnessed the appearance
of numerous examples of translators. 
Clutterbuck, Schn\"urer and Schulze \cite{CSS} (see also \cite{Altschuler-Wu}) proved that there exists an entire graphical translator in $\R^{n+1}$ which is rotationally symmetric, strictly convex  with translating velocity $-\ee_{n+1}$. This example is known as the {\bf translating paraboloid} or {\bf bowl soliton}. Moreover, they classified all the translating solitons of revolution, giving a one-parameter family $\{W^n_{\lambda}\}_{\lambda>0}$ of rotationally invariant cylinders called {\bf translating catenoids}. The parameter $\lambda$ 
{controls}
 the size of the neck of each translating soliton. The limit, as $\lambda \to 0$, of $W^n_{\lambda}$ consists of two
 superimposed copies of the bowl soliton with a singular point at the axis of symmetry. Furthermore, all these hypersurfaces have the following asymptotic expansion as r approaches infinity:
\begin{equation*}
\frac{r^2}{2(n-1)}-\log r+O(r^{-1}),
\end{equation*}
where $r$ is the distance function in $\mathbf{R}^{n}$. 
\begin{figure}[htbp]
\begin{center}
\includegraphics[width=.35\textwidth]{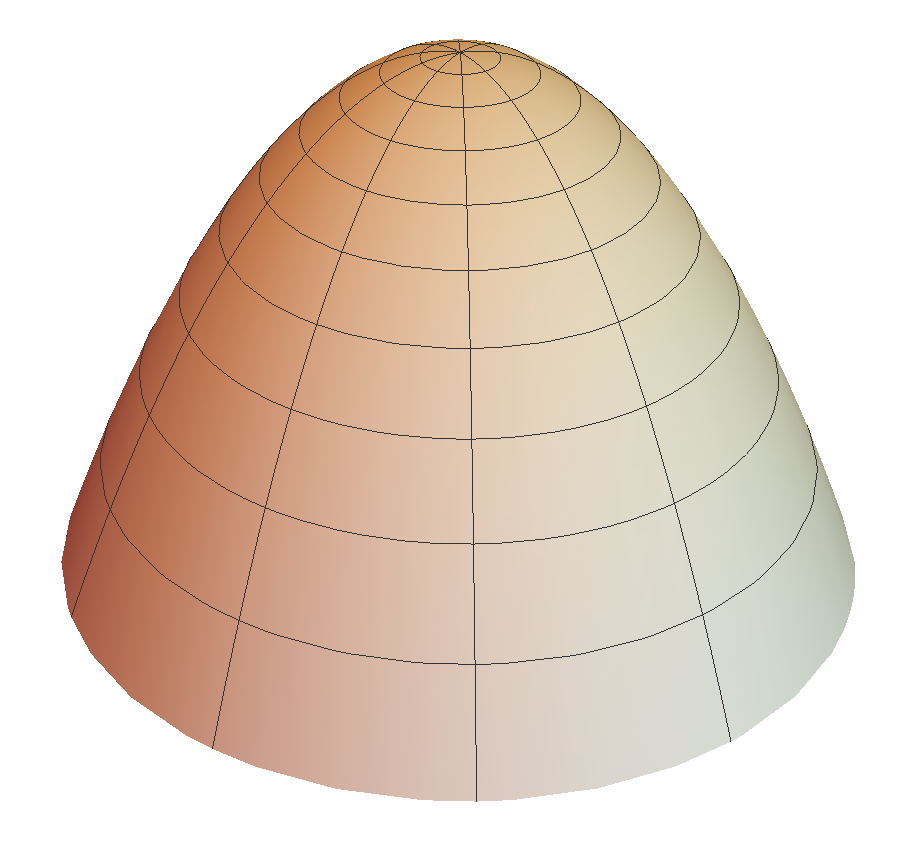} \hfill\includegraphics[width=.35\textwidth]{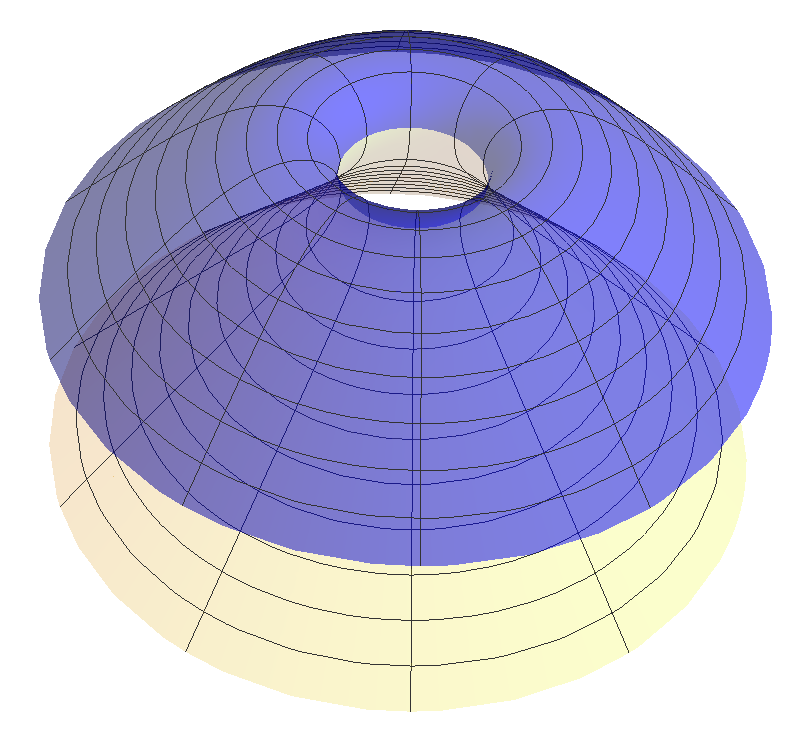}
\caption{The bowl soliton in $\RR^3$ and the translating catenoid for $\lambda=2.$}
\label{catenoid}
\end{center}
\end{figure}
These rotationally symmetric translating catenoids can be seen as the desingularization of two paraboloids connected by a small neck of some radius. 

{
Recall that the Costa-Hoffman-Meeks surfaces can be regarded as desingularizations
of a plane and catenoid: a sequence of Costa-Hoffman-Meeks surfaces with genus tending to infinity
converges (if suitably scaled) to the union of a catenoid and the plane through its waist.
This suggests that one try to construct translators by desingularizing the union
of a translating catenoid and a bowl solition.
D\'avila, del Pino, and Nguyen~ \cite{DDPN} were able to do that (for large genus) by
glueing methods, replacing the circle  of intersection by a 
surface similar to the singly periodic Scherk minimal surface (see also \cite{smith}.)
Previously, Nguyen  in \cite{Nguyen09},\cite{Nguyen13} and \cite{Nguyen15} had used similar techniques to desingularize the intersection of a grim reaper and a plane. In this way she obtained a complete periodic embedded translator of infinite genus, that 
she called a translating trident.
} 

Once  this abundance of translating solitons is guaranteed, there  arises the need to classify them. 
One of the first classification results was given by X.-J. Wang in \cite{wang}. He  characterized the bowl soliton as the only convex translating soliton which is an entire graph. 

Very recently, J. Spruck and L. Xiao \cite{spruck-xiao} have proved that a complete translating soliton which is graph over a domain in $\R^2$ must be convex (see Section~\ref{spruck-xiao-section} below.) So, combining both results we have:
{
\begin{theorem}
The bowl soliton is 
the only translator that is an entire graph over $\RR^2$.
\end{theorem}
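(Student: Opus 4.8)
The plan is simply to combine the two classification theorems quoted above. Let $M$ be a translator that is an entire graph over $\RR^2$; write $M=\gra(u)$ with $u\in C^\infty(\RR^2)$.

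\textbf{Step 1: completeness.} First I would observe that $M$ is complete in its induced metric. Indeed, the vertical projection $\pi\colon M\to\RR^2$ is distance non-increasing, so any curve in $M$ that eventually leaves every compact subset of $M$ projects to a curve in $\RR^2$ of infinite length, and hence has infinite length in $M$ as well; thus $M$ is complete. Consequently $M$ is a \emph{complete} translator that is a graph over a domain in $\RR^2$ --- namely over all of $\RR^2$.

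\textbf{Step 2: convexity.} Next I would apply the theorem of Spruck and Xiao \cite{spruck-xiao}: a complete translator that is a graph over a domain in $\RR^2$ is convex. This gives that $M$ is convex.

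\textbf{Step 3: rigidity.} Finally I would apply Wang's theorem \cite{wang}, which asserts that the bowl soliton is the only convex translator that is an entire graph over $\RR^2$. Since $M$ is convex by Step 2 and entire by hypothesis, it follows that $M$ is the bowl soliton, as claimed.

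The substantive content is entirely packaged inside the two cited results; the genuinely ``hard part'' is thus the Spruck--Xiao convexity estimate and Wang's rigidity argument for convex entire translators, neither of which is reproved here. Within the present argument the only point that deserves a word of care is Step 1: one must check that ``entire graph over $\RR^2$'' really does supply the completeness hypothesis required to invoke the Spruck--Xiao theorem, so that the two results can be chained together.
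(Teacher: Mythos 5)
Your proof is correct and is essentially the paper's own argument: the paper obtains this theorem precisely by combining Wang's characterization of the bowl soliton as the only convex entire graphical translator with the Spruck--Xiao theorem that complete graphical translators over planar domains are convex. Your Step 1, checking that an entire graph is automatically complete (via the length-nonincreasing projection), is a small point the paper leaves implicit, and it is handled correctly.
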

} 

Using the Alexandrov method of moving hyperplanes, Martín, Savas-Halilaj, and Smoczyk \cite{mss} showed that the bowl soliton is the only translator (not assumed to be graphical) 
that has one end and is $C^\infty$-asymptotic to a bowl soliton. 
Hershkovits \cite{Her18} improved this by showing uniqueness of the bowl soliton among (not necessarily graphical) 
translators that have one cylindrical end (and no other ends).   Haslhofer \cite{Has15} proved a related result
in higher dimensions: he showed that any translator in $\R^{n+1}$ that is noncollapsed and uniformly $2$-convex must
be the $n$-dimensional bowl soliton. At this point, we would like to mention the recent classification result of Brendle and Choi \cite{brendle}. They
prove that the rotationally symmetric bowl soliton is the only noncompact ancient solution of mean curvature flow in $\R^3$ which is strictly convex and noncollapsed.

Martín, Savas-Halilaj and Smoczyk also obtained one of the first characterizations of the family of tilted grim reapers:
\begin{theorem} \label{th:AH}\cite{mss}
Let $M$ be a connected translating soliton in $\R^{n+1}$, $n\geq 2$, such that the function $|A|^2H^{-2}$ has a local maximum in $\{x\in M: H(x)\ne 0\}$. 
Then $M$ is a tilted
grim reaper.
\end{theorem}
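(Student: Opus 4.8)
The plan is to analyze the function $W:=|A|^{2}H^{-2}$ on the open set $\Omega:=\{x\in M:H(x)\neq0\}$, where $H=\langle\ee_{n+1},N\rangle$, and to show that $W$ is a subsolution of an elliptic operator built from the drift Laplacian $\DELTA^{f}=\DELTA-\langle\ee_{n+1},\nabla\,\cdot\,\rangle$ of Proposition~\ref{first-variation}, so that an interior local maximum forces $M$ to be very special. Two ingredients are needed. The first is that $H=\langle\ee_{n+1},N\rangle$ is a Jacobi field (vertical translation being a symmetry of the weighted problem), $L_{f}H=0$, that is,
$$\DELTA^{f}H=-|A|^{2}H.$$
The second is a Simons-type identity for translators: differentiating the translator equation gives $\nabla H=-A(\nabla f)$ and shows that $\operatorname{Hess} f$ is proportional to the second fundamental form; inserting this into Simons' equation for a hypersurface of $\R^{n+1}$ produces the cancellation
$$\DELTA^{f}|A|^{2}=2|\nabla A|^{2}-2|A|^{4}.$$

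From these, using $\DELTA^{f}(H^{2})=-2H^{2}|A|^{2}+2|\nabla H|^{2}$ and the quotient rule, one gets on $\Omega$
$$\DELTA^{f}W+\frac{4}{H}\langle\nabla H,\nabla W\rangle=\frac{2}{H^{2}}(|\nabla A|^{2}-W|\nabla H|^{2}).$$
The naive Kato-type inequality $|\nabla A|^{2}\ge W|\nabla H|^{2}$ is \emph{false} pointwise (even on translators), so at first sight the right-hand side has no sign. The crucial step is to exploit the translator relation $\nabla|A|^{2}=H^{2}\nabla W+2WH\nabla H$: writing $\tfrac12\nabla_{k}|A|^{2}=\langle A,\nabla_{k}A\rangle$ and applying Cauchy--Schwarz $\langle A,\nabla_{k}A\rangle^{2}\le|A|^{2}|\nabla_{k}A|^{2}$ for each $k$, then summing over $k$, yields
$$|\nabla A|^{2}-W|\nabla H|^{2}\ \ge\ \frac{H^{2}}{4W}|\nabla W|^{2}+H\langle\nabla W,\nabla H\rangle,$$
and substituting back,
$$\DELTA^{f}W+\frac{2}{H}\langle\nabla H,\nabla W\rangle\ \ge\ \frac{|\nabla W|^{2}}{2W}\ \ge\ 0.$$
Thus $W$ is a subsolution, on $\Omega$, of a uniformly elliptic operator with smooth, locally bounded coefficients and no zeroth-order term.

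Next I would invoke the strong maximum principle at the local maximum $p\in\Omega$: $W$ is constant near $p$, and since $\{W=W(p)\}$ is then open and closed in the connected component $\Omega_{0}$ of $\Omega$ through $p$, $W\equiv W(p)$ on $\Omega_{0}$. Hence $\nabla W\equiv0$ on $\Omega_{0}$, forcing equality in the Cauchy--Schwarz step, i.e.\ $\nabla_{k}A$ is a multiple of $A$ for every $k$: $\nabla A=\mu\otimes A$ for some $1$-form $\mu$. Total symmetry of $\nabla A$ (Codazzi) gives $\mu_{k}h_{ij}=\mu_{i}h_{kj}$, and since $A\neq0$ on $\Omega_{0}$ this forces $A=\lambda\,\mu\otimes\mu$, i.e.\ the second fundamental form has $\operatorname{rank} A\equiv1$ on $\Omega_{0}$ (the case $\mu\equiv0$, i.e.\ $\nabla A\equiv0$, is excluded: it would make $M\cap\Omega_{0}$ an open piece of a generalized cylinder, none of which is a translator with $H\not\equiv0$). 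Therefore $\ker A$ is a parallel $(n-1)$-dimensional distribution whose leaves, being totally geodesic in $M$ and having $A$ vanish on them, are totally geodesic in $\R^{n+1}$; so $M$ splits locally as $\Gamma\times\R^{n-1}$ with $\Gamma$ a planar curve. The translator equation then forces $\Gamma$ to be a rotated and rescaled grim reaper curve, i.e.\ $M$ coincides on $\Omega_{0}$ with a tilted grim reaper \eqref{tiltedgrim} (see \cite{himw}). Finally, since translators are real-analytic, agreement with this tilted grim reaper on the nonempty open set $\Omega_{0}$ propagates, by unique continuation, to the whole connected manifold $M$.

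The hardest part will be the second paragraph: the differential inequality for $W$ cannot be obtained from a pointwise Kato inequality (which genuinely fails), and one must absorb the unwanted gradient terms using the translator identity $\nabla|A|^{2}=H^{2}\nabla W+2WH\nabla H$ together with Cauchy--Schwarz, tracking the coefficients carefully so that only a harmless $\langle\nabla H,\nabla W\rangle$ term survives on the left. A secondary point requiring care is the rigidity extraction — passing from equality in Cauchy--Schwarz and Codazzi to $\operatorname{rank}A=1$ and then to the product structure — and the analytic continuation from $\Omega_{0}$ to all of $M$.
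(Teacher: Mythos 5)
The paper does not actually prove Theorem~\ref{th:AH}; it only cites \cite{mss}, so there is no internal proof to compare against. Measured against the argument in \cite{mss}, your proposal is correct and follows essentially the same strategy: derive a second-order elliptic inequality with no zeroth-order term for $W=|A|^2H^{-2}$ on $\{H\ne 0\}$, apply the strong maximum principle at the interior local maximum, extract from the equality case that $\nabla A$ is pointwise proportional to $A$, use Codazzi to force $\operatorname{rank}A=1$, split off the parallel nullity distribution to get $\Gamma\times\RR^{n-1}$, and finish by analyticity. The one genuine (but cosmetic) difference is how the key inequality is obtained: \cite{mss} completes the square to the exact identity $\DELTA^f W+\tfrac{2}{H}\langle\nabla H,\nabla W\rangle=\tfrac{2}{H^4}\,|H\nabla A-\nabla H\otimes A|^2$, whereas you keep only the component of $\nabla_kA$ along $A$ via Cauchy--Schwarz and absorb the cross terms using $\nabla|A|^2=H^2\nabla W+2WH\nabla H$; your lower bound $\tfrac{|\nabla W|^2}{2W}$ is exactly the ``radial part'' of that perfect square, so the two computations are consistent and both yield the needed sign and the same rigidity in the equality case. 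Two small points worth tightening: note that $W\ge 1/n$ on $\{H\ne0\}$ (so the division by $W$ and the exclusion of the value $0$ are automatic), and at isolated zeros of your $1$-form $\mu$ the Codazzi relation is vacuous, so you should conclude $\operatorname{rank}A\equiv 1$ by combining the open dense set where $\mu\ne0$ with the closedness of the rank-$\le1$ condition and the fact that $A$ never vanishes on $\Omega_0$.
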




\section{Graphical translators}

If a translator $M$ is the graph of function $u:\Omega\subset\RR^n\to\RR$, we will
say that $M$ is a {\bf translating graph}; in that case, we also refer to the
function $u$ as a translator, and we say that $u$ is complete if its graph
is a complete submanifold of $\RR^{n+1}$.  
Thus $u:\Omega\subset\RR^n\to\RR$ is a translator if and only if
it solves the 
translator equation (the nonparametric form of~\eqref{general-translator-equation}):
\begin{equation}\label{divergence-translator-equation}
  D_i\left( \frac{D_iu}{\sqrt{1+ |Du|^2}} \right) = -\frac1{\sqrt{1+|Du|^2}}.
\end{equation}
The equation can also be written as
\begin{equation}\label{translator-equation}
   (1 + |Du|^2) \DELTA u - D_iu\,D_ju\,D_{ij}u + |Du|^2 +1 = 0.
\end{equation}

In a recent preprint, we classify all complete translating graphs in $\RR^3$.
In two other papers~\cite{families-1, families-2}, 
we construct new families of complete, properly embedded (non-graphical) translators:
a two-parameter family of translating annuli,
examples that resemble Scherk's minimal surfaces, and examples that resemble helicoids. In \cite{families-2}, we also construct several new families of complete translators that are obtained as limits of the Scherk-type
translators mentioned above.
They include a $1$-parameter family of single periodic surfaces
called Scherkenoids (see Fig.~\ref{scherk-1-2}) and a simply-connected translator called 
the pitchfork translator (see Fig.~\ref{scherk-2-2}). The pitchfork translator
resembles Nguyen's translating tridents \cite{Nguyen09} (see also \cite{families-1.5}):
like the tridents, it is asymptotic to a plane as $z\to\infty$
and to three parallel planes as $z\to -\infty$.
 However, the pitchfork
has genus $0$, whereas the tridents have infinite genus.
\begin{figure}[htbp]
\begin{center}
\includegraphics[width=.45\textwidth]{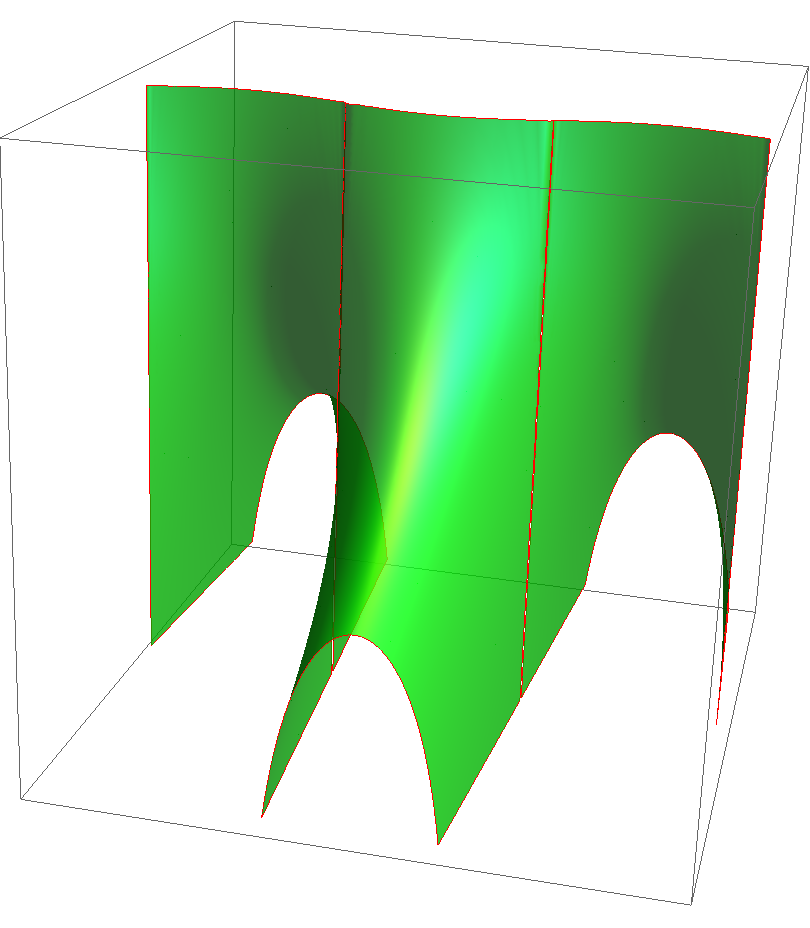}
\caption{A Scherkenoid is a singly periodic translator.
  As $z\to\infty$, it is asymptotic to a plane, and as $z\to-\infty$, it is asymptotic
  to an infinite family of parallel planes.
  There is a one-parameter family of such Scherkenoids, the parameter 
  being the angle between the upper plane and the lower planes.}
\label{scherk-1-2}
\end{center}
\end{figure}
\begin{figure}[htbp]
\begin{center}
\includegraphics[width=.45\textwidth]{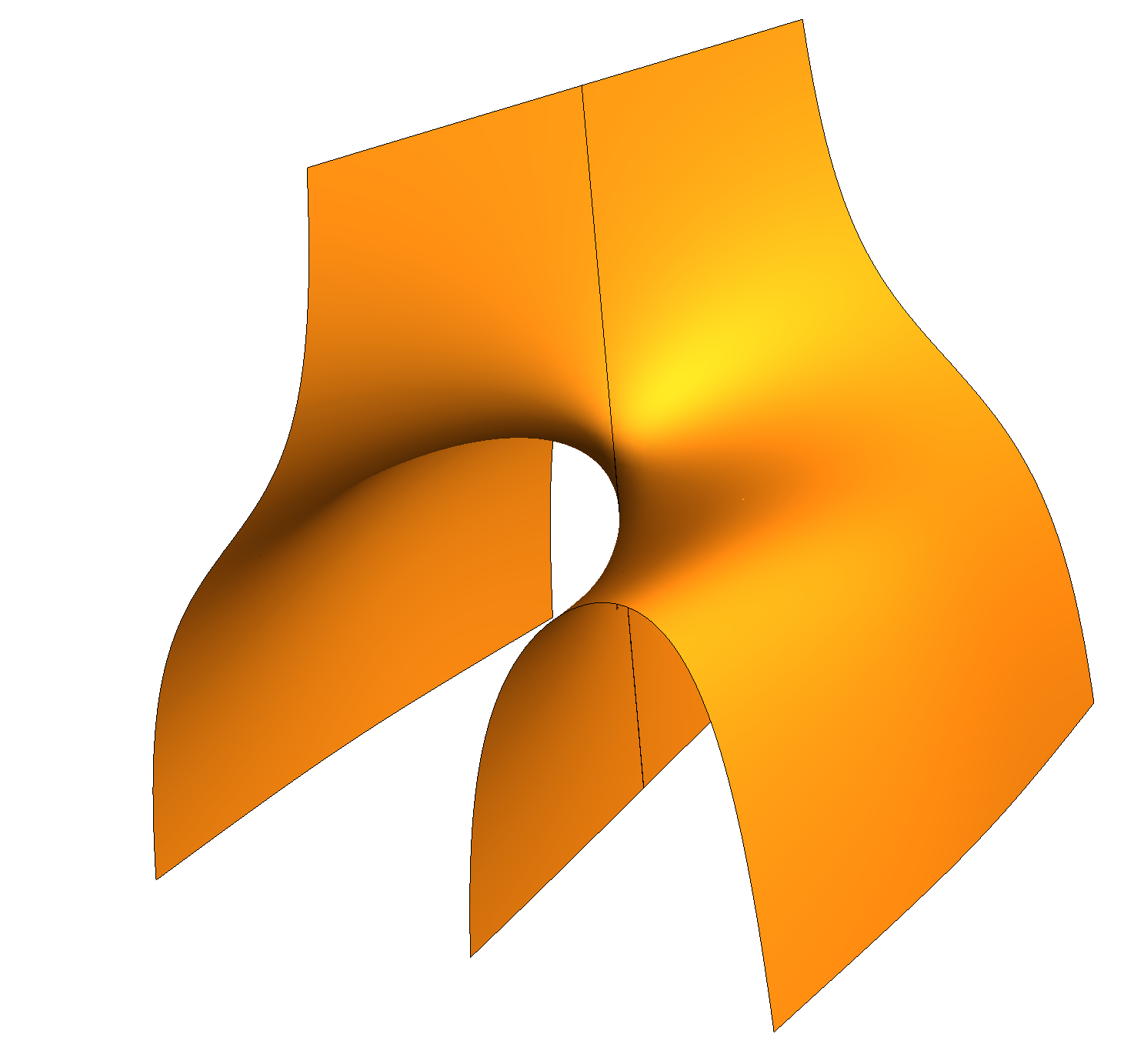}
\caption{The pitchfork translator
is a simply connected translator that is asympotic to a plane as $z\to\infty$
and to three parallel planes as $z\to - \infty$.}
\label{scherk-2-2}
\end{center}
\end{figure}

As a consequence of Theorem~\ref{th:AH}, we have that every translator $\RR^3$ with zero Gauss curvature is
 a grim reaper surface, a tilted grim reaper surface, or a vertical plane.

In addition to the examples described in the previous section, Ilmanen (in unpublished work) proved that for
each $0< k < 1/2$, there is a translator 
$
   u: \Omega\to \RR
$
with the following properties:
 $u(x,y)\equiv u(-x,y)\equiv u(x,-y)$, 
$u$ attains its maximum at $(0,0)\in \Omega$, and
\[
   D^2u(0,0)  
   =\begin{bmatrix} -k &0 \\ 0 &-(1-k) \end{bmatrix}.
\]
The domain $\Omega$ is either a strip $\RR\times(-b,b)$ or $\RR^2$.
He referred to these examples as {\bf $\DELTA$-wings}.
As $k\to 0$, he showed that the examples converge to the grim reaper surface.
Uniqueness (for a given $k$) was not known.   It was also not known
which strips $\RR\times (-b,b)$ occur as domains of such examples.
 The main result in \cite{himw} is the following:

\begin{theorem}\label{main-theorem}
For every $b>\pi/2$, there is (up to translation) a unique complete, strictly convex
translator 
$
   u^b: \RR\times (-b,b)\to\RR.
$
Up to isometries of $\RR^2$, the only other complete translating graphs in $\RR^3$
are the grim reaper surface, the tilted grim reaper surfaces, and the bowl soliton.
\end{theorem}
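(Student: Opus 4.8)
The plan is to organize the classification around the Gauss curvature of $M$ and the shape of the domain $\Omega$, where $M$ is the graph of $u\colon\Omega\to\RR$, using the quoted results of Spruck--Xiao, Wang and Theorem~\ref{th:AH} to dispose of every case but one, and reserving the genuinely new work for strictly convex translators over a slab. Two preliminary remarks: since $M$ is a complete translating graph, Spruck--Xiao gives that $M$ is convex, i.e.\ $\Omega$ is a convex open set and $u$ is a convex function; and since $M$ is a graph its tangent planes are nowhere vertical, so $H=\langle\ee_3,N\rangle$ never vanishes and, after fixing the orientation, $H>0$ everywhere, whence $\{x\in M:H(x)\ne 0\}=M$. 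Now suppose the Gauss curvature $K$ vanishes at some $p\in M$. Convexity gives $H^2-|A|^2=2K\ge 0$, so $|A|^2H^{-2}$ attains a global (hence local) maximum at $p$ on $\{H\ne 0\}=M$; by Theorem~\ref{th:AH}, and because a graph cannot be a vertical plane, $M$ is a grim reaper surface or a tilted grim reaper surface. If instead $K>0$ everywhere and $\Omega=\RR^2$, then $u$ is an entire convex translator and Wang's theorem forces $M$ to be the bowl soliton.

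There remains the case of a \emph{strictly convex} complete translator $u\colon\Omega\to\RR$ over a proper convex domain $\Omega\subsetneq\RR^2$, for which I would first identify $\Omega$. Using vertical translates of the grim reaper surface and of bowl solitons as barriers, together with the strong maximum principle for \eqref{translator-equation}, one shows that $u\to+\infty$ as one approaches $\partial\Omega$, and then that $\partial\Omega$ can contain neither a corner nor a boundary point with a unique supporting line; hence $\partial\Omega$ is one or two parallel lines, and since $\Omega\ne\RR^2$ it is a slab. A further comparison with the grim reaper surface, whose slab has width exactly $\pi$, rules out complete translators over slabs of width $\le\pi$ other than the grim reaper itself (which is flat and already listed), so here $\mathrm{width}(\Omega)=2b>\pi$. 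After a rotation, $\Omega=\RR\times(-b,b)$.

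For existence, fix $b>\pi/2$ and construct $u^b$ as a limit: solve the Dirichlet problem for \eqref{translator-equation} on the truncated slabs $(-R,R)\times(-b,b)$ with symmetric boundary data tending to $+\infty$ on the sides $y=\pm b$, and let $R\to\infty$, using the interior gradient and curvature estimates coming from the minimal--surface interpretation (the nowhere-vanishing Jacobi field $\langle\ee_3,N\rangle$ makes every translating graph a stable $g$-minimal surface) together with the smooth subsequential convergence of complete translating graphs recorded in Section~\ref{sec:translators}. The delicate point is a barrier argument showing the limit neither escapes to $+\infty$ nor collapses, so it is a complete translator over the \emph{full} slab; symmetry and Spruck--Xiao then force it to be strictly convex (it cannot be the tilted grim reaper, which lacks the symmetry, nor the grim reaper, whose slab has width $\pi$). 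For uniqueness, given two complete strictly convex translators over $\RR\times(-b,b)$, first apply the Alexandrov moving-plane method (reflecting in planes orthogonal to and then parallel to the slab) to show each is symmetric under $(x,y)\mapsto(-x,y)$ and $(x,y)\mapsto(x,-y)$ and attains its maximum at the origin; then determine the precise asymptotics of such a translator at the two ends of the slab as $x\to\pm\infty$; finally, after normalizing by an along-slab and a vertical translation, run a sliding/maximum-principle argument anchored by the matched asymptotics to conclude the two coincide. (Consistently, $u^b$ should converge to the grim reaper surface as $b\downarrow\pi/2$ and to the bowl soliton as $b\to\infty$.)

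The main obstacle lies in this last case: the domain classification --- excluding halfplanes, wedges and curved boundary arcs, and pinning down the sharp constraint $2b>\pi$ --- and, above all, the uniqueness of $u^b$ for a fixed width, neither of which follows from the cited theorems. Both hinge on understanding the behaviour of a strictly convex translator near the ends of its slab precisely enough to serve as the starting configuration for a maximum-principle comparison, and it is this asymptotic analysis, rather than the existence step (comparatively routine once the curvature estimates and compactness are available), that I expect to be the heart of the argument.
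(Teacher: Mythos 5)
Your reduction is sound and follows essentially the same skeleton as the paper's Section 7: Spruck--Xiao gives convexity, hence $K\ge 0$ and $|A|^2H^{-2}=1-2KH^{-2}\le 1$; if $K$ vanishes at a point, that point is a global maximum of $|A|^2H^{-2}$ on $\{H\ne 0\}=M$, and Theorem~\ref{th:AH} yields a (possibly tilted) grim reaper; if $K>0$ everywhere and $\Omega=\RR^2$, Wang's theorem yields the bowl soliton. This is a clean alternative to the paper's route, which instead quotes Shahriyari's theorem \cite{shari} that $\Omega$ must be a strip, a halfplane, or the plane, and then disposes of halfplanes via Wang.

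However, everything that makes the statement a theorem rather than a reduction is left as a program, and you say so yourself. Concretely: (i) nonexistence over halfplanes is not actually excluded by your boundary analysis (``one or two parallel lines'' still allows a halfplane); (ii) nonexistence over slabs of width $<\pi$, and the rigidity of the grim reaper at width exactly $\pi$, are asserted via ``a comparison with the grim reaper,'' which is a heuristic rather than an argument --- the width-$\pi$ rigidity in particular requires real work; and (iii) the existence and, above all, the uniqueness of $u^b$ for each $b>\pi/2$ are only outlined. For (iii) your plan is circular as stated: the moving-plane method on a noncompact graph over a slab has no starting position unless one already controls the behaviour of $u$ at the two ends of the slab, i.e., the asymptotic convergence to the tilted grim reapers $\grim_{\pm\theta(b)}$; and once that asymptotic analysis is in hand, uniqueness up to translation follows from a sliding/strong-maximum-principle comparison directly, with the reflection symmetries then a corollary of uniqueness rather than an input. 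The paper itself does not prove these steps in these notes either; it defers (i) to \cite{wang} and \cite{himw}, (ii) to \cite{spruck-xiao}, \cite{bourni-et-al} and \cite{himw}, and (iii) to \cite{himw}[Theorem~5.7]. So the proposal correctly locates the difficulty --- the asymptotic analysis at the ends of the slab and the ensuing comparison argument --- but does not close it.
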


{Although the paper~\cite{himw} is primarily about translators in $\RR^3$, the last sections 
extend Ilmanen's original proof to get $\DELTA$-wings in $\RR^{n+1}$ that have 
  prescribed principal curvatures at the origin.
  } 
  For $n\ge 3$, the examples include entire graphs that are not rotationally invariant.
At the end of the paper,  we modify the construction to produce a family of $\DELTA$-wings in $\RR^{n+2}$
 over any given slab of width $>\pi$.  
 {See~\cite{wang} for a different construction
 of some higher dimensional graphical translators. 
 } 
\begin{figure}[htbp]
\begin{center}
\includegraphics[width=.45\textwidth]{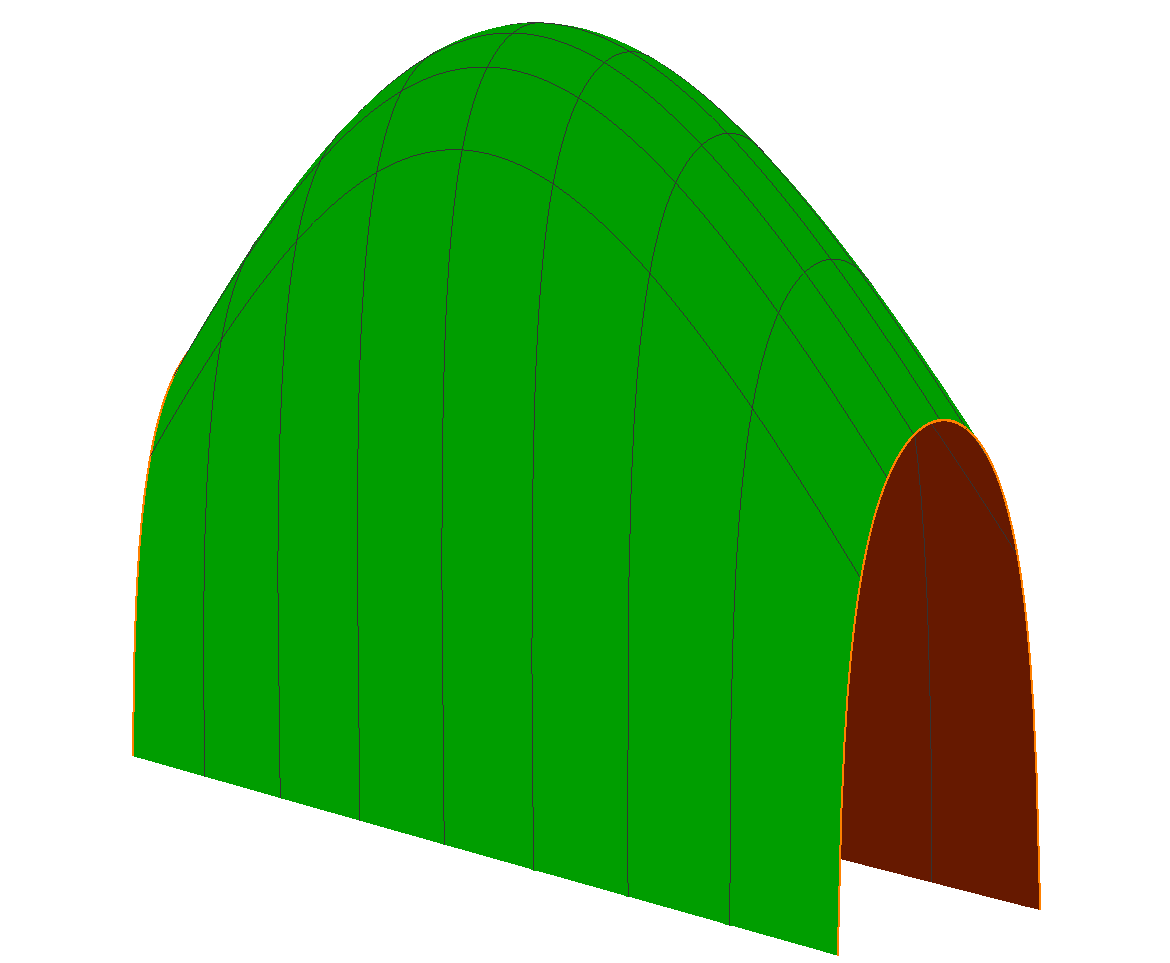}
\caption{The $\DELTA$-wing of width $\sqrt{2} \, \pi.$ As $y \to\pm \infty,$ this $\DELTA$-wing
is asymptotic to the tilted grim reapers $\grim_{-\frac{\pi}{4}}$ and $\grim_{\frac{\pi}{4}}$,
respectively.}
\label{intro}
\end{center}
\end{figure}

\section{The Spruck-Xiao Convexity Theorem}\label{spruck-xiao-section}
One of the fundamental results in the recent development of soliton theory has been the paper by Spruck 
and Xiao \cite{spruck-xiao}, where they proved that complete graphical translators 
{ (or, more generally, complete translators of positive mean curvature)}
are convex. The ideas contained 
in this paper are really inspiring and  we would like to provide a slightly simplified exposition of their proof.

At any non-umbilic point, we let $\kappa_1>\kappa_2$ be the principal curvatures
and $H=\kappa_1+\kappa_2>0$ be the mean curvature.
We let $v_1$ and $v_2$ be the principal direction unit vector fields, so
\[
  \text{$\kappa_i \equiv A(v_i,v_i)$ and $A(v_1,v_2)\equiv 0$}.
\]
Note $\nabla_uv_1$ is perpendicular to $v_1$.  Thus
\begin{equation}\label{derivatives-of-v1}
   \nabla_{v_1}v_1 = \alpha_1 v_2, \qquad \nabla_{v_2}v_1=\alpha_2v_2
\end{equation}
for some functions $\alpha_1$ and $\alpha_2$.
Since $0=\nabla_{v_i}(v_1\cdot v_2) = (\nabla_{v_i}v_1)\cdot v_2 + v_1\cdot(\nabla_{v_i}v_2)$,
we see that
\begin{equation}\label{derivatives-of-v2}
  \nabla_{v_1}v_2 = -\alpha_1v_1, \qquad \nabla_{v_2}v_2 = -\alpha_2 v_1.
\end{equation}
Thus
\begin{align*}
\nabla_{u} \kappa_1
&= \nabla_{u}A(v_1,v_1) \\
&= (\nabla_{u}A)(v_1,v_1) + 2A(\nabla_{u}v_1,v_1).
\end{align*}
But $\nabla_uv_1$ is perpendicular to $v_1$, so $A(\nabla_{u}v_1,v_1)\equiv 0$.
Thus
\begin{equation}\label{grad-k}
   \nabla_{u}\kappa_1 = (\nabla_{u}A)(v_1,v_1).
\end{equation}
In particular,
\begin{equation}\label{key}
  \nabla_i \kappa_j = h_{jj,i},
\end{equation}
{ where $h_{ij}=A(v_i,v_j)$ and $h_{ij,k}= (\nabla_{v_k}A)(v_i,v_j)$.}
From $A(v_1,v_2)\equiv 0$, we see that
\begin{align*}
h_{12,i}
&= (\nabla_i)A(v_1,v_2) \\
&= \nabla_i (A(v_1,v_2)) - A(\nabla_iv_1,v_2) - A(v_1,\nabla_iv_2) \\
&= 0 -\alpha_i h_{22} + \alpha_i h_{11} \\
&= (\kappa_1-\kappa_2)\alpha_i
\end{align*}
{by~\eqref{derivatives-of-v1} and~\eqref{derivatives-of-v2}.}
Thus
\begin{equation}\label{alpha-i}
\alpha_i
=
\frac{h_{12,i}}{\kappa_1- \kappa_2}.
\end{equation}

Also, if we let $u_i=v_i$ at a particular point and extend by parallel transport on radial geodesics, we have
\begin{align*}
\DELTA \kappa_1 
&= \nabla_{u_i}\nabla_{u_i} (A(v_1,v_1) )\\
&= \nabla_{u_i} ( (\nabla_{u_i}A)(v_1,v_1)) \\
&= (\DELTA A)(v_1,v_1) + 2 (\nabla_{u_i}A) (\nabla_{u_i}v_1,v_1).
\end{align*}
Thus
\begin{align*}
\DELTA\kappa_1 
&= (\DELTA A)(v_1,v_1) + 2(\nabla_iA)(\nabla_iv_1,v_1) \\
&= (\DELTA A)(v_1,v_1) + 2(\nabla_iA) (\alpha_i v_2,v_1) \\
&= (\DELTA A)(v_1,v_1) + 2\alpha_i h_{12,i} \\
&= (\DELTA A)(v_1,v_1) + 2\frac{ (h_{12,1})^2 + (h_{12,2})^2}{\kappa_1- \kappa_2} \\
&= (\DELTA A)(v_1,v_1) + \frac{2Q^2}{\kappa_1 - \kappa_2},
\end{align*}
where
 \begin{equation}\label{Q-squared}
   Q^2 := (h_{12,1})^2 + (h_{12,2})^2=(h_{11,2})^2 + (h_{22,1})^2.
 \end{equation}
(The second equality follows from the Codazzi equations.) 

Now suppose the surface is a translator.  Then, we have that (see \cite[Lemma 2.1]{spruck-xiao} or \cite[Lemma~2.1]{mss}):
$$ \DELTA A- \nabla_{e_3^T} A+|A|^2 \, A=0.$$
Hence,
\begin{align*}
 (\DELTA A)(v_1,v_1)
 &=
 -|A|^2A(v_1,v_1) + (\nabla_{e_3^T} A)(v_1,v_1) \\
 &= 
 -|A|^2 \kappa_1 + \nabla_{e_3^T}\kappa_1
 \end{align*}
 by~\eqref{grad-k}.
 Thus
 \begin{equation}\label{drift-of-kappa1}
 \DELTA^f \kappa_1 
 = -|A|^2\kappa_1 + \frac{2Q^2}{\kappa_1-\kappa_2}.
 \end{equation}
 Recall that the {\it drift Laplacian} (see \eqref{deltaT-op}) is given by 
 $$\DELTA^f:= \DELTA- \langle \nabla f, \nabla \cdot \rangle= \DELTA -\langle e_3 , \nabla \cdot \rangle.$$
 Likewise,
 \[
 \DELTA^f \kappa_2 = -|A|^2\kappa_2 - \frac{ 2Q^2}{\kappa_1-\kappa_2}.
 \]
 
 Adding these gives
 \begin{equation}\label{drift-of-H}
   \DELTA^f H = - |A|^2 H.
 \end{equation}
  
 From~\eqref{drift-of-kappa1} and~\eqref{drift-of-H},
 \[
   \kappa_1\DELTA^fH - H \DELTA^f\kappa_1 = \frac{-2HQ^2}{\kappa_1- \kappa_2}.
 \]
 
 Thus
 {
 \begin{equation}\label{drift-of-H-over-k}
 \begin{aligned}
 \DELTA^f\left( \frac{H}{\kappa_1} \right)
 &=
 \frac{ \kappa_1 \DELTA^f H - H \DELTA^f\kappa_1}{\kappa_1{}^2} 
 -
 2\,\frac{\nabla \kappa_1}{\kappa_1}\cdot\nabla\left(\frac{H}{\kappa_1}\right)   \\
 &=
  \frac{-2HQ^2}{(\kappa_1-\kappa_2)(\kappa_1)^2}
 -
 2\,\frac{\nabla \kappa_1}{\kappa_1}
 \cdot 
 \nabla\left(\frac{H}{\kappa_1} \right),
 \end{aligned}
 \end{equation}
so
\begin{equation}\label{drift-of-H-over-k-inequality}
\DELTA^f\left( \frac{H}{\kappa_1} \right)
+
2\,\frac{\nabla \kappa_1}{\kappa_1} \cdot
 \nabla\left(\frac{H}{\kappa_1} \right) 
\le 
0.
 \end{equation}
}  

\begin{theorem}[Spruck-Xiao]
Let $M\subset \RR^3$ be a complete translator with $H>0$.  Then $M$ is convex.
\end{theorem}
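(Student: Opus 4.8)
The plan is to use the differential inequality \eqref{drift-of-H-over-k-inequality} together with completeness and a maximum-principle-type argument to rule out the existence of any non-umbilic point, or rather to show that $H/\kappa_1 \equiv 1$ (equivalently $\kappa_2 \geq 0$) everywhere. First I would observe that at any point where $M$ fails to be convex we have $\kappa_2 < 0 < \kappa_1$ (since $H = \kappa_1 + \kappa_2 > 0$ forces $\kappa_1 > 0$), so the function $w := H/\kappa_1 = 1 + \kappa_2/\kappa_1$ satisfies $0 < w < 1$ precisely on the non-convex set and $w \geq 1$ on the convex set; convexity of $M$ is thus equivalent to $w \geq 1$ everywhere. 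The inequality \eqref{drift-of-H-over-k-inequality} says that $w$ is a supersolution of the drift-type operator $L w := \DELTA^f w + 2\,\frac{\nabla\kappa_1}{\kappa_1}\cdot\nabla w$ on the open set $\{H > 0, \kappa_1 > \kappa_2\}$ where the principal curvatures are smooth. So $w$ cannot attain an interior local minimum in that open set unless it is locally constant; and on the umbilic set $\kappa_1 = \kappa_2$ we have $w = 2 > 1$, so the umbilic points are harmless.

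Next I would exploit that $M$ is a translating graph — or more precisely, by the earlier discussion (Proposition~\ref{first-variation} and the remarks following it), $M$ is a stable minimal surface for the conformally-Euclidean metric $g$, and $\langle \ee_3, \nu\rangle > 0$ is a positive Jacobi field; in fact $\langle \ee_3, \nu \rangle = H > 0$ is exactly the hypothesis. The key structural input is that $M$, being complete with $H > 0$, has controlled geometry: using the stability and the curvature estimates for $g$-minimal surfaces (from \cite{white-curvature-estimates, white-controlling} or \cite[Chapter 3]{white-intro}), one gets a uniform bound on $|A|$ in terms of the $g$-distance to the boundary, hence a uniform bound on $|A|$ on all of the complete surface $M$. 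With $|A|$ bounded, one can run a blow-down / tangent-object argument at infinity, or more simply argue directly: suppose $\inf_M w = m < 1$. If the infimum is attained at an interior non-umbilic point, the strong maximum principle for $L$ forces $w$ constant equal to $m$, whence $Q \equiv 0$ and $\kappa_2/\kappa_1$ is constant; then \eqref{drift-of-kappa1}–\eqref{drift-of-H} give $\DELTA^f \kappa_1 = -|A|^2 \kappa_1$ and a short computation shows $M$ splits off a line and is a tilted grim reaper, which is convex — contradiction with $m < 1$. If the infimum is not attained, take points $p_i$ with $w(p_i) \to m$; translate $M$ (using the ambient translations of $\RR^3$, under which the translator equation and $w$ are invariant up to the vertical normalization) so that $p_i$ moves to the origin, and pass to a smooth limit $M_\infty$ using the compactness from bounded $|A|$ and stability. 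On $M_\infty$ the function $w_\infty$ attains its infimum $m$ at the origin, and we are back in the attained case.

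The main obstacle I anticipate is the limit/compactness step and the need to ensure the limit $M_\infty$ is genuinely a complete translator on which $w$ is well-behaved: one must check that $\kappa_1 > 0$ persists in the limit (it could degenerate to $\kappa_1 = 0$, i.e. a flat limit — but then $H \to 0$ there, which must be excluded using that $H$ satisfies its own drift equation \eqref{drift-of-H} and a Harnack inequality, or using that a translator with $H \equiv 0$ is a vertical plane which is trivially convex), and that the umbilic set does not interfere. A secondary delicate point is justifying the strong maximum principle in the rigidity case when the coefficient $\nabla\kappa_1/\kappa_1$ blows up near $\kappa_1 = 0$; this is handled by working on the open set where $\kappa_1 > 0$, which contains every non-convex point, and noting $w \to 1^-$ as $\kappa_1 \to 0^+$ with $\kappa_2$ bounded, so the infimum $m < 1$ is never approached near that locus. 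Everything else — the derivation of \eqref{drift-of-H-over-k-inequality}, the splitting theorem identifying the equality case as a grim reaper — is either already in the excerpt or follows from \cite{mss} and standard arguments.
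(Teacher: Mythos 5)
Your overall framework (set $w = H/\kappa_1$, show $\inf w < 1$ is impossible via the supersolution inequality \eqref{drift-of-H-over-k-inequality}, and split into ``infimum attained'' and ``infimum not attained'') matches the paper's, and your treatment of the attained case is essentially the paper's Step 1. The genuine gap is in the non-attained case. You translate a minimizing sequence $p_i$ to the origin, extract a smooth limit $M_\infty$, and assert that $w_\infty$ attains the infimum $m<1$ there, reducing to the attained case. But the paper's Step 2 proves exactly the opposite: the limit of $M-p_i$ is \emph{always} a vertical plane, precisely because if it were a complete translator with $H>0$ the infimum would be attained on it, contradicting Step 1. On a vertical plane $\kappa_1=\kappa_2=H=0$, so $w$ is a $0/0$ indeterminacy in the limit and the attained-case argument cannot be re-run. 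Your two suggested fixes for this degeneration do not work: a Harnack inequality for $H$ (via \eqref{drift-of-H}) controls ratios of $H$ on balls of fixed radius but cannot prevent $H(p_i)\to 0$ along a divergent sequence --- indeed $H\to 0$ at infinity already on the grim reaper, so flat limits genuinely occur and cannot be ``excluded''; and observing that a translator with $H\equiv 0$ is a plane (hence convex) says nothing about whether $H/\kappa_1$ can approach a value $m<1$ along points running off toward such a flat region while both $H$ and $\kappa_1$ tend to zero.

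The paper's resolution of this case is the real content of the proof and is absent from your proposal: one applies the Omori--Yau principle (Theorem~\ref{Omori-Yau}) to $H/\kappa_1$ to obtain a minimizing sequence $p_n$ along which $\nabla(H/\kappa_1)\to 0$ and $\DELTA^f(H/\kappa_1)\to\delta\ge 0$, and then uses the identity $\nabla H = A(\ee_3^T,\cdot)$ together with the smooth convergence of $M-p_n$ to a vertical plane (so $\ee_3^T\to\ee_3$) to show that $\nabla H/\kappa_1$ converges to a \emph{nonzero} vector $N$ as in \eqref{N}. Propagating this through \eqref{gradient} and \eqref{drift-of-H-over-k} multiplied by $H/\kappa_1$ yields the strictly negative term $-2\lambda^2/(2-\eta)$ of \eqref{lambda}, contradicting $\eta\,\delta\ge 0$. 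Without this quantitative step, or a genuine substitute for it, the argument is incomplete. (A secondary, fixable imprecision: your claimed global bound on $|A|$ follows from stability, but it does not by itself rescue the limiting argument, since bounded curvature is exactly what permits the flat limit.)
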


\begin{proof}
Suppose the theorem is false.
Then
\begin{equation}\label{hypothesis}
  \eta := \inf \frac{H}{\kappa_1} = \inf \left(1 + \frac{\kappa_2}{\kappa_1}\right) \in [0,1).
\end{equation}
{
Note that the set of points where $H/\kappa_1<2$ contains no umbilic points,
which implies that $H/\kappa_1$ is smooth on that set and that we can apply the formulas 
in this section (\S6), which were derived assuming that $\kappa_1>\kappa_2$.
} 
\newline

\noindent
{\bf Step 1}: The infimum $\eta$ is not attained.
For suppose it is attained at some point.  
{
Then by~\eqref{drift-of-H-over-k-inequality} and the strong
minimum principle (see~\cite[Theorem~3.5]{gilbarg-trudinger}
or~\cite[Chapter~6, Theorem~3]{evans-book}), $H/\kappa_1$ is constant on $M$. 
} 
Therefore $\kappa_2/\kappa_1= H/\kappa_1-1$ 
is constant on $M$.
Since $H>0$ and since $H/\kappa_1$ is constant, 
we see { from~\eqref{drift-of-H-over-k}} that $Q\equiv 0$, i.e., (see~\eqref{Q-squared})
 $h_{12,1}\equiv h_{12,2}\equiv 0$.
Hence by~\eqref{alpha-i}, \eqref{derivatives-of-v1} and~\eqref{derivatives-of-v2}, the
frame $\{v_1,v_2\}$ is parallel, so $\kappa_1\kappa_2\equiv 0$,
{
contadicting~\eqref{hypothesis}.  Thus the infimum is not attained.
} 
\newline

\noindent {\bf Step 2}: If $p_n\in M$ is a sequence with $H(p_n)/\kappa_1(p_n)\to \eta$, 
then (after passing to a subsequence)
$M - p_n$ converges smoothly to a limit by the curvature estimates mentioned at the end of Section \ref{sec:translators}. We claim that the limit must be a vertical plane.
For suppose not.  Then (after passing to a subsequence) $M-p_n$ converges smoothly to 
a complete translator $M'$ with $H>0$.  
By the smooth convergence,  $H/\kappa_1$ attains its minimum value on $M'$ at the origin, and that
minimum value is $\eta$, contradicting Step 1.
\newline
\newline
\noindent
{\bf Step 3}:
Now we apply the Omori-Yau maximum principle (see Theorem~\ref{Omori-Yau} below) to get a sequence $p_n\in M$ such that
\begin{align}
\frac{H}{\kappa_1} = 1 + \frac{\kappa_1}{\kappa_2} &\to \eta,                                      \label{omori-function}\\
\nabla \left(\frac{H}{\kappa_1} \right) &\to 0,                              \label{omori-gradient} \\
\DELTA \left( \frac{H}{\kappa_1} \right) &\to  \delta \in [0,\infty].    \label{omori-laplacian}
\end{align}
From~\eqref{omori-gradient} and~\eqref{omori-laplacian}, we see that
\begin{equation}\label{omori-drift}
\DELTA^f \left( \frac{H}{\kappa_1} \right) \to  \delta \in [0,\infty].
\end{equation}
By Step 2, we can assume that $M-p_n$ converges smoothly to a vertical plane.
For the rest of the proof, any statement that some quantity tends to a limit
refers only to the quantity at the points $p_n$.

Since $A$ is a quadratic form with eigenvalues $\kappa_1$ and $\kappa_2$,
$A/\kappa_1$ is a a quadratic form with eigenvalues $1$ and $\kappa_2/\kappa_1= H/\kappa_1-1$.
Thus (by passing to a subsequence) we can assume that $A/\kappa_1$ (at $p_n$) converges
to a quadratic form with eigenvalues $1$ and $\eta-1$. (Note that the eigenvalue $\eta-1$ is negative
by Hypothesis~\eqref{hypothesis}.)

Recall that
\[
   \nabla H = A(\ee_3^T,\cdot). 
\]
{(See, for example, \cite[Lemma 2.1]{mss}.)}
Since $\ee_3^T\to\ee_3$, we see that $\nabla H/\kappa_1$ (at $p_n$) converges
 to a nonzero vector $N$:
\begin{equation}\label{N}
  \frac{\nabla H}{\kappa_1}\to N\ne 0.
\end{equation}
Now 
\begin{equation}\label{gradient}
 \nabla\left(\frac{H}{\kappa_1}\right)
 =
 \frac{\nabla H}{\kappa_1} - \frac{H}{\kappa_1}\frac{\nabla \kappa_1}{\kappa_1}.
 \end{equation}
By Omori-Yau (see~\eqref{omori-gradient}), this tends to $0$, so 
\begin{equation}\label{pork-one}
  \frac{H}{\kappa_1}\frac{\nabla \kappa_1}{\kappa_1} \to N,
\end{equation}
or, equivalently (see~\eqref{key}),
\begin{equation}\label{re-pork-one}
 \frac{H}{\kappa_1} \frac{h_{11,i}}{\kappa_1} \to N_i \qquad (i=1,2),
\end{equation}
where $N_i = N\cdot v_i$.

We can rewrite~\eqref{gradient} as
\begin{align*}
 \nabla\left( \frac{H}{\kappa_1} \right)
 &=
 \frac{\nabla\kappa_1 + \nabla \kappa_2}{\kappa_1} - \frac{H}{\kappa_1}\frac{\nabla\kappa_1}{\kappa_1}
 \\
 &=
 \left(1 -\frac{H}{\kappa_1} \right) \left( \frac{\nabla\kappa_1}{\kappa_1} \right) + \frac{\nabla\kappa_2}{\kappa_1}.
 \end{align*}
 Multiply by $H/\kappa_1$:
 \[
 \frac{H}{\kappa_1} \nabla\left( \frac{H}{\kappa_1} \right) 
 =
 \left(1 -\frac{H}{\kappa_1} \right) \left(\frac{H}{\kappa_1} \frac{\nabla\kappa_1}{\kappa_1} \right) 
 + \frac{H}{\kappa_1} \frac{\nabla\kappa_2}{\kappa_1}
\]
By Omori-Yau (see~\eqref{omori-gradient}), this tends to $0$, so (using~\eqref{pork-one}),
\begin{equation}\label{pork-two}
 \frac{H}{\kappa_1} \frac{\nabla\kappa_2}{\kappa_1} \to (\eta-1)N,
\end{equation}
or, equivalently {(by~\eqref{key})},
\begin{equation}\label{re-pork-two}
   \frac{H}{\kappa_1} \frac{h_{22,i}}{\kappa_1} \to (\eta - 1) N_i \qquad (i=1,2).
\end{equation}
Combining~\eqref{re-pork-one} with $i=2$ and~\eqref{re-pork-two} with $i=1$ gives
\begin{equation}\label{lambda}
  \left(\frac{H}{\kappa_1}\right)^2
     \left(  \frac{Q}{\kappa_1}  \right)^2
 \to
 (N_2)^2 + (\eta-1)^2 (N_1)^2 := \lambda^2 > 0.
 \end{equation}
 Note that $\lambda^2>0$ because $N\ne 0$ and $\eta<1$ by Hypothesis~\eqref{hypothesis}.
 
 Now multiply~\eqref{drift-of-H-over-k} by $H/\kappa_1$:
\begin{equation*}
\left(\frac{H}{\kappa_1}\right) \DELTA^f\left( \frac{H}{\kappa_1} \right)
 =
   \frac{-2}{1 - \frac{\kappa_2}{\kappa_1}} \left( \frac{H}{\kappa_1} \right)^2 \left( \frac{Q}{\kappa_1}\right)^2
 -
2 \left( \frac{H}{\kappa_1} \frac{\nabla \kappa_1}{\kappa_1}\right)\cdot  \nabla\left( \frac{H}{\kappa_1} \right). 
\end{equation*}
Using~\eqref{omori-function} and~\eqref{omori-drift} for the left side,
\eqref{omori-function} and~\eqref{lambda} for the first term on the right,
and~\eqref{pork-one} and~\eqref{omori-gradient} for the second term, we can let $n\to\infty$
to get:
\[
 \eta \, \delta  \le  \frac{-2}{2-\eta}\lambda^2 + 0,
\]
a contradiction (since $\eta$ and $\delta$ are nonnegative).
\end{proof}

We used the Omori-Yau Theorem (see, for example, \cite{Alias-et-al}):

\begin{theorem}[Omori-Yau Theorem]\label{Omori-Yau}
Let $M$ be a complete Riemannian manifold with Ricci curvature bounded below.
Let $f:M\to \RR$ be a smooth function that is bounded below.  Then there is a sequence $p_n$ in $M$
such that
\begin{gather}
f(p_n) \to \inf_Mf, \\
\nabla f(p_n) \to 0, \\
\liminf \DELTA f(p_n) \ge 0.
\end{gather}
The theorem remains true if we replace the assumption that $f$ is smooth
by the assumption that $f$ is smooth on $\{ f< \alpha \}$ for some $\alpha>\inf_Mf$.
\end{theorem}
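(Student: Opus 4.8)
The statement is the classical Omori--Yau maximum principle, and the plan is the standard perturbation argument. The point is that $f$ itself need not attain its infimum, but if we add a small multiple $\varepsilon\varphi$ of an auxiliary function $\varphi$ that grows at infinity, then $f+\varepsilon\varphi$ does attain an honest minimum; evaluating the first- and second-order conditions there and letting $\varepsilon\to 0$ produces the desired sequence. So the whole theorem reduces to constructing a proper function $\varphi\colon M\to[1,\infty)$ with $|\nabla\varphi|\le C$ and $\DELTA\varphi\le C$ for some constant $C$.

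To build $\varphi$, fix $o\in M$, put $r=\dist(o,\cdot)$ and $n=\dim M$, and set $\varphi=\sqrt{1+r^2}$; this is proper because $M$ is complete, and $|\nabla\varphi|=r/\sqrt{1+r^2}\le 1$ everywhere. Since $\operatorname{Ric}\ge-(n-1)\kappa\,g$ for some $\kappa\ge 0$, the Laplacian comparison theorem gives $\DELTA r\le (n-1)\sqrt{\kappa}\coth(\sqrt{\kappa}\,r)\le(n-1)(r^{-1}+\sqrt{\kappa})$ wherever $r$ is smooth, and a short computation starting from $\DELTA\varphi=(1+r^2)^{-3/2}+\tfrac{r}{\sqrt{1+r^2}}\,\DELTA r$ then bounds $\DELTA\varphi$ above by a constant $C=C(n,\kappa)$, again wherever $r$ is smooth. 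The only place $r$ fails to be smooth is the cut locus of $o$; there I would invoke Calabi's trick, sliding the base point slightly inward along a minimizing geodesic so that $r$ is majorized near the offending point by a smooth function satisfying the same comparison inequality up to an arbitrarily small error. Consequently $\DELTA\varphi\le C$ holds in the barrier (support) sense at every point of $M$, which is all that the minimum principle below requires. This cut-locus issue is the only genuinely technical point in the argument; everything else is soft.

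Now, for $\varepsilon>0$, set $f_\varepsilon=f+\varepsilon\varphi$. Since $f$ is bounded below and $\varphi$ is proper, $f_\varepsilon\to+\infty$ at infinity, so $f_\varepsilon$ attains its global minimum at some $q_\varepsilon\in M$. The first-order condition gives $\nabla f(q_\varepsilon)=-\varepsilon\nabla\varphi(q_\varepsilon)$, hence $|\nabla f(q_\varepsilon)|\le\varepsilon$, and the second-order condition together with the barrier bound $\DELTA\varphi\le C$ gives $\DELTA f(q_\varepsilon)\ge-\varepsilon\DELTA\varphi(q_\varepsilon)\ge-\varepsilon C$. Moreover, given $\delta>0$, choosing $x_\delta$ with $f(x_\delta)<\inf_M f+\delta$ yields $\inf_M f\le f(q_\varepsilon)\le f_\varepsilon(q_\varepsilon)\le f_\varepsilon(x_\delta)<\inf_M f+\delta+\varepsilon\varphi(x_\delta)$, so $f(q_\varepsilon)\to\inf_M f$ as $\varepsilon\to 0$. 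Taking $\varepsilon_n\downarrow 0$ and $p_n=q_{\varepsilon_n}$ gives the three conclusions. Finally, when $f$ is only assumed smooth on $\{f<\alpha\}$ for some $\alpha>\inf_M f$, observe that the existence of $q_\varepsilon$ and the estimate $f(q_\varepsilon)\to\inf_M f$ require only that $f$ be bounded below and lower semicontinuous, so for all sufficiently small $\varepsilon$ the point $q_\varepsilon$ lies in $\{f<\alpha\}$, where $f$ is smooth and the first- and second-order computations at $q_\varepsilon$ are valid; the rest is unchanged.
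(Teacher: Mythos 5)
Your argument is correct, but it does substantially more than the paper does, and it handles the one non-classical clause by a different mechanism. In the paper, the classical Omori--Yau principle (for smooth $f$, under a lower Ricci bound) is simply quoted from the literature (\cite{Alias-et-al}); the only thing actually proved is the final assertion, and that is done by precomposing $f$ with a cutoff $\phi:\RR\to\RR$ so that $\phi\circ f$ is globally smooth, applying the quoted theorem to $\phi\circ f$, and observing that near the infimum the two functions agree. You instead prove the whole theorem from scratch by the standard Cheng--Yau perturbation: build a proper exhaustion $\varphi=\sqrt{1+r^2}$ with $|\nabla\varphi|\le 1$ and $\DELTA\varphi\le C(n,\kappa)$ via Laplacian comparison (with Calabi's barrier trick on the cut locus), minimize $f+\varepsilon\varphi$, and read off the three conclusions as $\varepsilon\to 0$; you then get the final assertion for free by noting that the minimizers $q_\varepsilon$ eventually fall inside the open set $\{f<\alpha\}$ where $f$ is smooth, so the first- and second-order conditions are legitimate there. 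Your route buys self-containedness and, arguably, a cleaner treatment of the last clause: the paper's cutoff argument as literally written (with $\phi\equiv1$ near $\inf_M f$ and $\phi\equiv0$ on $[\alpha,\infty)$) needs to be read as flattening $f$ above a level rather than as stated, whereas your localization of $q_\varepsilon$ avoids modifying $f$ at all; both arguments, yours and the paper's, do implicitly need $f$ to be globally continuous (true in the application to $H/\kappa_1$), and you correctly flag this via lower semicontinuity. The paper's route buys brevity, which is appropriate for a survey. The only point in your write-up that is slightly loose is writing $\DELTA\varphi(q_\varepsilon)$ at a point of the cut locus, where $\varphi$ need not be twice differentiable; the fix is exactly the one you describe (run the first- and second-order tests on $f+\varepsilon\tilde\varphi$ for the smooth upper barrier $\tilde\varphi$ touching $\varphi$ from above at $q_\varepsilon$), so this is a matter of phrasing rather than a gap.
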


To see the last assertion, let $\phi:\RR\to \RR$ be a smooth, monotonic function
such that $\phi(t)=0$ for $t\ge \alpha$ and such that $\phi\equiv 1$ on an open interval containing $\inf_Mf$.
Then $\phi\circ f$ is smooth, so the Omori-Yau Theorem holds for $\phi\circ f$, from which it follows immediately that
the Omori-Yau Theorem also holds for $f$.

In our case, the function $H/\kappa_1$ is smooth except at umbilic points. At such points,
$H/\kappa_1=2$.   Since we assumed that the infimum was $<1$, we could invoke
the Omori-Yau Theorem.


\section{Characterization of translating graphs in $\RR^3$}

As we mentioned before, the authors of these notes have obtained the complete classification 
of the complete graphical translators in Euclidean $3$-space. 

Recall that by translator we mean a smooth function $u: \Omega \rightarrow \R$ such that 
$M=\operatorname{Graph}(u)$
is a translator.  Then $u$ must be  solution of the equation:
\begin{equation} \label{eq:trans}
(1+u_y^2) \, u_{xx}-2 u_x \, u_y \, u_{xy}+(1+u_x^2)  \, u_{yy}+u_x^2+u_y^2+1=0.
\end{equation} 
If we impose that $M$ is complete, then we will say that $u$ is a {\bf complete translator}.
In this setting Shahriyari~\cite{shari} proved in her thesis the following
\begin{theorem}[Shahriyari]
If $M$ is complete, then $\Omega$ must be a strip, a halfspace, or all of $\R^2$.
\end{theorem}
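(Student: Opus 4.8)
The plan is to deduce the statement from the Spruck--Xiao convexity theorem by a translation (blow-up) argument carried out at a boundary point of $\Omega$, using the subsequential smooth compactness of complete translating graphs and the strong maximum principle for the metric $g_{ij} = e^{-x_3}\,\delta_{ij}$ in which translators are exactly the $g$-minimal surfaces.

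First I would note that, since $M = \operatorname{Graph}(u)$ is a graph with upward unit normal $N$, its mean curvature $H = \langle \ee_3, N\rangle = (1+|Du|^2)^{-1/2}$ is everywhere positive, so by the Spruck--Xiao theorem (Section~\ref{spruck-xiao-section}) $M$ is convex; hence $M$ bounds a convex region in $\RR^3$ and its projection $\Omega$ to the $x_1x_2$-plane is an open convex subset of $\RR^2$. If $\Omega = \RR^2$ we are done, so assume $\partial\Omega \ne \emptyset$ and fix $p \in \partial\Omega$. The whole theorem then reduces to the following claim: $\partial\Omega$ contains the entire straight line through $p$ in an appropriate direction. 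Granting the claim, two such lines cannot cross (a crossing point would force $\partial\Omega$ to look locally like an ``$X$'', impossible for the boundary of a convex open set), so all the lines contained in $\partial\Omega$ are pairwise parallel; a connected convex open set cannot have three parallel lines in its boundary, so $\partial\Omega$ is either one line, in which case $\Omega$ is a halfspace, or two parallel lines, in which case $\Omega$ is the strip between them.

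To establish the claim, choose $x_i \in \Omega$ with $x_i \to p$ and set $M_i := M - (x_i, u(x_i))$, a complete translating graph over $\Omega - x_i$ passing through the origin. I would first show $|Du(x_i)| \to \infty$. If not, then along a subsequence the tangent planes $T_{x_i}M$ stay uniformly non-vertical, so by the curvature estimates and subsequential smooth convergence of complete translating graphs recalled in Section~\ref{sec:translators}, the surfaces $M_i$ converge smoothly to a translator that, near the origin, is a graph over a fixed horizontal disc; for large $i$ this forces $\Omega - x_i$, hence $\Omega$, to contain a ball of fixed radius about $x_i$, so $p$ would be an interior point of $\Omega$ --- a contradiction. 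Hence $T_{x_i}M$ becomes vertical, and after passing to a subsequence $M_i \to M_\infty$ smoothly, where $M_\infty$ is a translator through the origin with vertical tangent plane $\Pi$ there, and $M_\infty$ is convex (a smooth limit of the convex surfaces $M_i$). Now $\Pi$ is itself a translator --- its mean curvature vanishes and $\ee_3$ is tangent to it --- and a convex surface lies locally on one side of each of its tangent planes, so near the origin $M_\infty$ is a $g$-minimal graph over $\Pi$ lying on one side of the $g$-minimal plane $\Pi$ and tangent to it at the origin. The strong maximum principle, together with unique continuation, then gives $M_\infty = \Pi$. Finally, since $M_i \to \Pi$ smoothly on every ball $B_R(0)$, the sets $\Omega - x_i$ contain arbitrarily long segments of the line $L$ obtained by projecting $\Pi$ to the $x_1x_2$-plane; letting $i \to \infty$ and then $R \to \infty$ shows $\overline{\Omega} \supseteq p + L$, and since $\Omega$ is convex a line through the boundary point $p$ that is contained in $\overline{\Omega}$ must lie entirely in $\partial\Omega$. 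This proves the claim.

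I expect the main obstacle to be precisely the middle step: proving that $|Du|$ blows up as one approaches $\partial\Omega$ and, more sharply, that the translated limit $M_\infty$ is exactly a vertical plane. This is where completeness of $M$ is genuinely used --- a complete translating graph cannot ``terminate'' over a boundary point of its domain --- and it also uses the convexity of $M$ from Spruck--Xiao in order to pin down the blow-up limit via the maximum principle. Everything after that, namely assembling the boundary lines, is elementary plane convexity.
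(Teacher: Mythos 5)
The paper does not actually prove this theorem: it is quoted from Shahriyari's paper \cite{shari}, which predates Spruck--Xiao, so your argument is necessarily a different route from the original (her proof cannot and does not use convexity of $M$). That said, within the logic of these notes your strategy is admissible and not circular: the proof of the Spruck--Xiao theorem in Section 6 and the compactness facts in Section 3 nowhere rely on Shahriyari's result. Your blow-up at the boundary is essentially sound: the dichotomy ``either $|Du(x_i)|$ stays bounded, forcing a uniform interior ball around $x_i$ and contradicting $x_i\to p\in\partial\Omega$, or the subsequential limit $M_\infty$ has a vertical tangent plane at the origin'' is correct, and so is the conclusion that $M_\infty$ is a vertical plane. (For that last step you do not even need convexity: $M_\infty$ is a limit of graphs, so $\langle \ee_3,\nu\rangle\ge 0$ is a nonnegative Jacobi field on the $g$-minimal surface $M_\infty$ vanishing at the origin, hence $\equiv 0$ by the strong maximum principle, which forces a vertical plane. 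This is cleaner than tangency-plus-unique-continuation and is closer to how such rigidity is usually extracted.) The plane-convexity endgame is elementary and fine once one knows $\Omega$ is convex.

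The one genuine gap is the very first step: ``$M$ is convex, hence $M$ bounds a convex region and $\Omega$ is convex.'' Spruck--Xiao gives you that the second fundamental form of $M$ is positive semidefinite, i.e.\ \emph{local} convexity. Passing from a complete, locally convex surface to the boundary of a convex body is the content of the van Heijenoort--Sacksteder theorem, and it requires at least one point of strict convexity; you must separately dispose of the totally degenerate case $K\equiv 0$, where $M$ is a grim reaper, a tilted grim reaper, or a vertical plane (as recorded after Theorem~\ref{th:AH}), so that $\Omega$ is a strip anyway. Without this, ``locally concave $u$ on a connected open $\Omega$'' does not by itself make $\Omega$ convex, and the convexity of $\Omega$ is genuinely load-bearing in your argument: the blow-up only shows that $\overline\Omega$ contains a full line through each boundary point, and without convexity of $\Omega$ that does not pin down $\Omega$ as a strip, halfplane, or plane. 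State and cite the global convexity theorem (or Wang's and Spruck--Xiao's own reductions), and the proof closes up.
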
 
In~\cite{wang}, X. J. Wang proved that the only entire convex translating graph  is the
bowl soliton, and that there are no complete translating graphs defined over
halfplanes.  Thus by the Spruck-Xiao Convexity Theorem, 
the bowl soliton is the only complete translating graph defined over a plane or halfplane.

Hence, it remained to classify the translators $u:\Omega\to \RR$ whose domains
are strips.
Our main new contributions in this line are:
\begin{enumerate}
\item\label{existence-item} For each $b>\pi/2$, we prove (\cite{himw}[Theorem~5.7])
existence and uniqueness (up to translation)
of a complete translator $u^b: \RR \times (-b,b) \to \RR$ that is not a tilted grim reaper.
We call $u^b$ the {\bf $\DELTA$-wing} of width $2b$.
\item We give a simpler proof (see \cite{himw}[Theorem~6.7]) 
that there are no complete graphical translators in $\R^3$ defined over halfplanes in $\R^2$.
\end{enumerate}
Furthermore, there are no complete translating graphs defined over strips of width $<\pi$ (see \cite{spruck-xiao, bourni-et-al}),
and the grim reaper surface is the only translating graph over a strip of width $\pi$ (see \cite{himw}).
Consequently, we have a classification: every complete, translating graph in $\RR^3$ is
one of the following: a grim reaper surface or tilted grim reaper surface, a $\DELTA$-wing,
or the bowl soliton.

We remark that Bourni, Langford, and Tinaglia gave a different proof of the existence
(but not uniqueness) of the $\DELTA$-wings in~\eqref{existence-item} \cite{bourni-et-al}.



\begin{bibsection}[References]
\begin{biblist}
\bib{Alias-et-al}{book}{
   author={Al\'{i}as, L. J.},
   author={Mastrolia, P.},
   author={Rigoli, M.},
   title={Maximum principles and geometric applications},
   series={Springer Monographs in Mathematics},
   publisher={Springer, Cham},
   date={2016},
   pages={xvii+570},
   isbn={978-3-319-24335-1},
   isbn={978-3-319-24337-5},
   review={\MR{3445380}},
   doi={10.1007/978-3-319-24337-5},
}

\bib{Altschuler-Wu}{article}{  
author={Altschuler, S. J.},
author={ Wu, L. F.}, 
title={Translating surfaces of the non-parametric mean curvature flow with prescribed contact angle},
journal={ Calc. Var. Partial Differential Equations},
volume={2},
 date={1994}, 
 number={1}, 
 pages={101--111},}
 
 \bib{An1}{article}{
 author={Angenent, S.},
 title={On the formation of singularities in the curve shortening flow},
 journal={ J. Differential Geom.},
  volume={33},
  date={1991},
   pages={601-633},
   }
 
\bib{bourni-et-al}{article}{
   author={Bourni, T.},
   author={Langford, M.},
   author={Tinaglia, G.},
     title={On the existence of translating solutions of mean curvature flow in slab regions},
journal={Analysis \& PDE (to appear), arXiv:1805.05173},
   date={2018},
   pages={1-25},
}

\bib{brendle}{article}{
   author={Brendle, S.},
   author={Choi, K.},
   title={Uniqueness of convex ancient solutions to mean curvature flow in $\R^3$},
      journal={Inventiones math.},
      volume={217},
      number={1},
   date={2019},
   pages={35-76},
}

\bib{CSS}{article}{ 
author={Clutterbuck, J.},
author={ Schn{\"u}rer, O.},
author={Schulze, F.}, 
title={Stability of translating solutions to mean curvature flow},
journal={Calc. Var. and Partial Differential Equations},
volume={29},
date={2007}, 
pages={281--293},}


\bib{colding-minicozzi}{book}{
author={Colding, T.H.},
author={Minicozzi, W.},
title={A course on minimal surfaces},
series={Graduate Studies in Mathematics},
publisher={AMS},
volume={121},
pages={313},
year={2011},
}

\bib{DDPN}{article}{
   author={D\'{a}vila, Juan},
   author={del Pino, Manuel},
   author={Nguyen, Xuan Hien},
   title={Finite topology self-translating surfaces for the mean curvature
   flow in $\Bbb{R}^3$},
   journal={Adv. Math.},
   volume={320},
   date={2017},
   pages={674--729},
   issn={0001-8708},
   review={\MR{3709119}},
   doi={10.1016/j.aim.2017.09.014},
}

\bib{hildebrandt}{book}{
author={Dierkes, U.},
author={Hildebrandt, S.},
author={Küster, A.},
author={Wohlrab, O.},
title={Minimal surfaces I, Boundary Value Problems},
series={Grundlehren der mathematischen Wissenschaften},
volume={295},
publisher={Springer-Verlag},
year={1992},}

\bib{regularityTheoryMCF}{book}{
  author =	 {Ecker, K.},
  title = 	 {Regularity Theory for Mean Curvature Flow},
  publisher = 	 {Birkhauser},
  date = 	{2004},
}



\bib{evans-book}{book}{
   author={Evans, L. C.},
   title={Partial differential equations},
   series={Graduate Studies in Mathematics},
   volume={19},
   edition={2},
   publisher={American Mathematical Society, Providence, RI},
   date={2010},
   pages={xxii+749},
   isbn={978-0-8218-4974-3},
   review={\MR{2597943}},
   doi={10.1090/gsm/019},
}

\bib{nino}{article}{
author={Gama, E.S.},
author={Martín, F.},
title={Translating solitons of the mean curvature flow asymptotic to hyperplanes in $\RR^{n+1}$},
journal={Int. Math. Res. Not., to appear},
issn={1073-7928},
}

\bib{gilbarg-trudinger}{book}{
   author={Gilbarg, D.},
   author={Trudinger, N. S.},
   title={Elliptic partial differential equations of second },
   series={Classics in Mathematics},
   note={Reprint of the 1998 edition},
   publisher={Springer-Verlag, Berlin},
   date={2001},
   pages={xiv+517},
   isbn={3-540-41160-7},
   review={\MR{1814364}},
}


\bib{Has15}{article}{
author={Haslhofer, R.},
title={Uniqueness of the bowl soliton},
journal={Geom. Top.},
volume={19},
number={4},
date={2015},
pages={2393-2406},
}

\bib{Her18}{article}{
author={Hershkovits, O.},
title={Translators asymptotic to cylinders},
journal={J. Reine Angew. Math. (to appear), arXiv:1805.10553}, 
date={2018},
}

\bib{himw}{article}{
author={Hoffman, D.},
author={Ilmanen, T.},
author={Martín, F.},
author={White, B.},
title={Graphical Translators for Mean Curvature Flow},
date={2019},
journal={Calc. Var. and Partial Differential Equations},
volume={58:117}
paperinfo={Preprint arXiv:1805.10860},
doi={https://doi.org/10.1007/s00526-019-1560-x},
}

\bib{families-1}{article}{
author={Hoffman, D.},
author={Martín, F.},
author={White, B.},
title={Scherk-like Translators for Mean Curvature Flow. },
date={2019},
journal={Journal of Differential Geometry (to appear), arXiv:1903.04617}
paperinfo={In preparation},
}

\bib{families-1.5}{article}{
author={Hoffman, D.},
author={Martín, F.},
author={White, B.},
title={Nguyen's Tridents and the Classification of Semigraphical Translators for Mean Curvature Flow.},
date={2019},
journal={ arXiv:1909.09241}
}

\bib{families-2}{article}{
author={Hoffman, D.},
author={Martín, F.},
author={White, B.},
title={Translating Annuli for Mean Curvature Flow},
date={2020},
journal={in preparation},
}

\bib{huisken84}{article}{
author={Huisken, G.},
title={Flow by Mean Curvature of Convex Surfaces into Spheres},
journal={Journal of Differential Geometry},
volume={20}
date={1984},
pages={237--266},
}

\bib{huisken90}{article}{
author={Huisken, G.},
title={Asymptotic behaviour for singularities of the mean curvature flow},
journal={Journal of Differential Geometry},
volume={31}
date={1990},
pages={285--299},
}

\bib{ilmanen}{article}{
   author={Ilmanen, T.},
   title={Elliptic regularization and partial regularity for motion by mean
   curvature},
   journal={Mem. Amer. Math. Soc.},
   volume={108},
   date={1994},
   number={520},
   pages={x+90},
}

\bib{LecturesNotesOnMCF}{book}{
  author =	 {Mantegazza, C.},
  title = 	 {Lectures Notes on Mean Curvature Flow},
  publisher = 	 {Birkhäuser},
  date= {2011},
  }

\bib{mss}{article}{
author={Martín, F.},
author={Savas-Halilaj, A.},
author={Smoczyk, K.},
title={On the topology of translating solitons of the mean curvature flow},
journal={Calc. of Var. and Partial Differential Equations},
volume={54},
date={2015},
number={3},
pages={2853--2882},
}	

\bib{mpss}{article}{
author={Martín, F.},
author={Pérez-García, J.},
author={Savas-Halilaj, A.},
author={Smoczyk, K.},
title={A characterization the of the grim reaper cylinder}, 
journal={J. Reine Angew. Math.},
volume={2019},
date={2019},
number={746},
pages={209--234},
issn={0075-4102},
}	

\bib{meeks-perez}{book}{
author={Meeks, W.H.},
author={Pérez, J.}
title={ A survey on classical minimal surface theory}, 
publisher={University Lecture Series (AMS) vol. 60},
year={2012},
pages={ 182},
}

	
\bib{Nguyen09}{article}{
author={Nguyen, X. H.}, 
title={Translating tridents},
journal={Comm. Partial Differential Equations},
volume={34},
number={3},
date={2009},
pages={257-280},
}

\bib{Nguyen13}{article}{
author={Nguyen, X. H.},
title={Complete embedded self-translating surfaces under mean curvature flow},
journal={J. Geom. Anal.}
volume={23},
number={3},
date={2013},
pages={1379-1426},
}

\bib{Nguyen15}{article}{
author={Nguyen, X. H.},
title={Doubly periodic self-translating surfaces for the mean curvature flow},
journal={Geom. Dedicata},
volume={174},
number={1}, 
date={2015},
pages={177-185},
}


\bib{shari}{article}{
   author={Shahriyari, L.},
   title={Translating graphs by mean curvature flow},
   journal={Geom. Dedicata},
   volume={175},
   date={2015},
   pages={57--64},
}




\bib{smith}{article}{
   author={Smith, G.},
   title={On complete embedded translating solitons of the mean curvature flow that are of finite genus},
   journal={ arXiv:1501.04149},
   date={2015},
   pages={1-69},
}

\bib{spruck-xiao}{article}{
   author={Spruck, J.},
   author={Xiao, L.},
   title={Complete translating solitons to the mean curvature flow in $\R^3$ 
   with nonnegative mean curvature},
   journal={American J. Math. (to appear), arXiv:1703.01003v2},
   date={2017},
   pages={1-23},
}

\bib{wang}{article}{
   author={Wang, X.J.},
   title={Convex solutions of the mean curvature flow},
   journal={Ann. Math.},
   volume={173},
   date={2011},
   pages={1185-1239},
}

\bib{white-curvature-estimates}{article}{
   author={White, B.},
   title={Curvature estimates and compactness theorems in $3$-manifolds for
   surfaces that are stationary for parametric elliptic functionals},
   journal={Invent. Math.},
   volume={88},
   date={1987},
   number={2},
   pages={243--256},
   issn={0020-9910},
   review={\MR{880951}},
   doi={10.1007/BF01388908},
}


\bib{white-intro}{article}{
   author={White, B.},
   title={Introduction to minimal surface theory},
   conference={
      title={Geometric analysis},
   },
   book={
      series={IAS/Park City Math. Ser.},
      volume={22},
      publisher={Amer. Math. Soc., Providence, RI},
   },
   date={2016},
   pages={387--438},
   review={\MR{3524221}},
}

\bib{white-controlling}{article}{
   author={White, B.},
   title={Controlling area blow-up in minimal or bounded mean curvature
   varieties},
   journal={J. Differential Geom.},
   volume={102},
   date={2016},
   number={3},
   pages={501--535},
   issn={0022-040X},
   review={\MR{3466806}},
}   

\end{biblist}
\end{bibsection}
    
\end{document}